\setlist[enumerate]{leftmargin=*,label=\rm{(\alph*)},topsep=4pt,itemsep=2pt,partopsep=2pt, parsep=2pt}
\newtheorem{theorem}{Theorem}[section]
\newtheorem{lemma}[theorem]{Lemma}
\newtheorem{proposition}[theorem]{Proposition}
\newtheorem{cor}[theorem]{Corollary}
\theoremstyle{definition}
\newtheorem{definition}[theorem]{Definition}
\theoremstyle{remark}
\newtheorem{remark}[theorem]{Remark}
\newcommand{\ydh}{{}^{H}_{H}\mathcal{YD}}
\newcommand{\nc}{\newcommand}
\nc{\D}{\Delta}
\nc{\cP}{\mathcal{P}}
\nc{\cU}{\mathcal{U}}
\nc{\cX}{\mathcal{X}}
\nc{\cE}{\mathcal{E}}
\nc{\cS}{\mathcal{S}}
\nc{\cA}{\mathcal{A}}
\nc{\cC}{\mathcal{C}}
\nc{\cO}{\mathcal{O}}
\nc{\cQ}{\mathcal{Q}}
\nc{\cB}{\mathcal{B}}
\nc{\cJ}{\mathcal{J}}
\nc{\cI}{\mathcal{I}}
\nc{\cM}{\mathcal{M}}
\nc{\cN}{\mathcal{N}}
\nc{\cF}{\mathcal{F}}
\nc{\cG}{\mathcal{G}}
\nc{\cL}{\mathcal{L}}
\nc{\cK}{\mathcal{K}}
\nc{\e}{\varepsilon}
\nc{\ba}{\mathbf{a}} \nc{\bb}{\mathbf{b}} \nc{\bt}{\mathbf{t}}
\nc{\br}{\mathbf{r}}
\nc{\bi}{\mathbf{i}} \nc{\bj}{\mathbf{j}}
\nc{\bB}{\mathbf{B}} \nc{\bS}{\mathbf{S}}  
\newcommand{\Sn}{{\mathbb S}}
\newcommand\id{\operatorname{id}}
\newcommand{\ydg}{{}^{\ku G}_{\ku G}\mathcal{YD}}
\newcommand\Ind{\operatorname{Ind}}
\newcommand\gr{\operatorname{gr}}
\newcommand\ad{\operatorname{ad}}
\newcommand\Alg{\operatorname{Alg}}
\def\ku{\Bbbk}
\def\ot{\otimes}
\def\N{\mathbb{N}}
\def\Z{\mathbb{Z}}
\def\F{\mathbb{F}}
\def\X{\mathbb{X}}
\def\B{\mathfrak{B}}
\def\fH{\mathfrak{H}}
\def\fX{\mathfrak{X}}
\def\fU{\mathfrak{U}}
\renewcommand{\_}[1]{_{\left( #1 \right)}}
\def\sl23{\mathbf{SL}(2,3)}
\def\ydsl23{{}^{\sl23}_{\sl23}\mathcal{YD}}
\begin{document}



\title[]{Lifting in a non semisimple world}

\author[J. Arce, C. Vay ]{Jack Arce and Cristian Vay}

\address{\newline\noindent Pontificia Universidad Cat\'olica del Per\'u, Secci\'on Matem\'aticas, PUCP, Av. Universitaria 1801, San Miguel, Lima 32, Per\'u}
\email{jarcef@pucp.edu.pe}

\address{\newline\noindent Facultad de Matem\'atica, Astronom\'\i a, F\'\i sica y Computaci\'on,
	Universidad Nacional de C\'ordoba. CIEM--CONICET. Medina Allende s/n, 
	Ciudad Universitaria 5000 C\'ordoba, Argentina}
\email{cristian.vay@unc.edu.ar}

\thanks{\noindent 2020 \emph{Mathematics Subject Classification.} 16T05, 16T20.}

\begin{abstract}

This is a contribution to the problem of classifying all deformations -- a.~k.~a. liftings -- of the bosonization of a Nichols algebra $\B(V)$ over a cosemisimple and non-semisimple Hopf algebra $H$. Such a situation arises when the underlying field has positive characteristic or when $H$ is infinite-dimensional. Given an $H$-module $M$ that is an extension of $V$ by $H$, we first introduce an algebra $T(V)\#_MH$ which generalizes the usual bosonization $T(V)\#H$. Indeed, these two objects coincide when $M$ is a trivial extension. We provide necessary conditions for $T(V)\#_MH$ to be a Hopf algebra and a cocycle deformation of $T(V)\#H$. These conditions appear particularly natural when $H$ is a group algebra. We then prove that every lifting is a quotient of $T(V)\#_MH$ for some extension $M$. Echoing Archimedes, $T(V)\#_MH$ stands as a fulcrum over which we can pivot to lift the relations of the Nichols algebra. From this point, one can replicate the strategy proposed by Andruskiewitsch, Angiono, Garcia I., Masuoka and the second author to show that every lifting is a cocycle deformation of $\B(V)\#H$. We illustrate this idea with two examples of different nature. We classify all pointed liftings of the Fomin--Kirillov algebra $\cF\cK_3$ in characteristic $2$, and prove they are all cocycle deformations one another. We also prove that the Jordanian enveloping algebra of $\mathfrak{sl}(2)$ defined by Andruskiewitsch, Angiono and Heckenberger is a cocycle deformation of the bosonization of the Jordan plane over the infinite cyclic group.

\end{abstract}

\maketitle

\section{Introduction}

The Lifting Method was conceived by Andruskiewitsch and Schneider to classify Hopf algebras whose coradical is a (cosemisimple) Hopf subalgebra \cite{liftingmethod}. Large families of Hopf algebras have been classified by different authors following this method, mostly in characteristic zero and in the finite-dimensional case. Among these, the classification of the pointed Hopf algebras over abelian groups is the most outstanding \cite{annals,nicoivanagus,ivanagus}. One step in the Lifting Method consists in classifying all the deformations -- a.~k.~a. liftings -- of the bosonization of a Nichols algebra over the cosemisimple Hopf algebra. This work concerns this step. To fix ideas, let $H$ be a cosemisimple Hopf algebra,  $V$ a Yetter-Drinfeld module over $H$ and $\B(V)$ its Nichols algebra. A lifting of $\B(V)$ over $H$, (or a lifting of $\B(V)\#H$) is a Hopf algebra $L$ such that its coradical $L_0$ is a Hopf subalgebra isomorphic to $H$ and the associated graded Hopf algebra $\gr L=\oplus_{n>0}L_{n}/L_{n-1}$ is isomorphic to $\B(V)\#H$.

In most of the cases, it was proved that the liftings are cocycle deformations of $\B(V)\# H$, as for the finite-dimensional pointed Hopf algebras over abelian groups. Also, this property makes easier to verify whether a given Hopf algebra is indeed a lifting. Motivated by these facts, Andruskiewitsch, Angiono, Garcia I., Masuoka and the second author \cite{AAnMGV} proposed a strategy to compute all the liftings that are quotients of $T(V)\#H$ as cocycle deformations of $\B(V)\#H$, see Figure \ref{fig:lifting via cocycle semisimple}. On the other hand, Andruskiewitsch and the second author \cite{AV} showed that every lifting is a quotient of $T(V)\#H$ if $H$ is semisimple, using a powerful result by Ardizzoni, Menini and Stefan \cite{ardizzonimstefan}. Thus, one might obtain all the liftings of $\B(V)\#H$ as cocycle deformations when $H$ is semisimple -- as it is very well-known, $H$ is semisimple if it is finite-dimensional and the characteristic of the underlying field is zero.

\begin{figure}[h]
	\begin{tikzpicture}[scale=1,every node/.style={scale=1}]

		\node[align=center] (centrar) at (-5,0) {};
		
		\node[fill=white] (TVHl) at (-2.5,0) {$T(V)\#H$};
		
		\node[fill=white,label={above: {\tiny\it bigalois extensions}}] (TVH) at (0,0) {$T(V)\#H$};
		
		\node[fill=white] (TVHr) at (2.5,0) {$T(V)\#H$};
		
		\node[fill=white,minimum width=1cm] (L) at (-2.5,-3) {$L$};
		
		\node[fill=white,minimum width=1cm] (A) at (0,-3) {$A$};
		
		\node[fill=white] (BVH) at (2.5,-3) {$\B(V)\#H$};
		
		\node[align=center] (cd) at (5,0) {\tiny\it cocycle \\ \tiny\it deformations};
		
		\node[align=center] (cd) at (5,-3) {\tiny\it cocycle \\ \tiny\it deformations};
		
		\draw[->>] (TVHl) to (-2.5,-.9);
		\draw[dotted] (-2.5,-1) to (-2.5,-1.9);
		\draw[->>] (-2.5,-2) to (L);
		
		\draw[->>] (TVH) to (0,-.9);
		\draw[dotted] (0,-1) to (0,-1.9);
		\draw[->>] (0,-2) to (A);
		
		\draw[->>] (TVHr) to (2.5,-.9);
		\draw[dotted] (2.5,-1) to (2.5,-1.9);
		\draw[->>] (2.5,-2) to (BVH);
		

		\begin{scope}[on background layer]
			\draw[dashed] (-3.5,0) to (6,0);

			\draw[dashed] (-3.5,-3) to (6,-3);
		\end{scope}
	\end{tikzpicture}
	\caption{The strategy in \cite{AAnMGV} consists of constructing, step by step, a right Galois extension for the Hopf algebra on the right-hand side as a quotient of the one above following \cite{gunther}, starting from the algebras at the top. At each step, the Hopf algebra on the left-hand side is built from the other two following \cite{schauenburgL}. Thus, the algebras in the same row are cocycle deformation of one another. In the end, a lifting $L$ of $\B(V)\#H$ arises.}\label{fig:lifting via cocycle semisimple}
\end{figure}

A key point in deducing that every lifting is a quotient of $T(V)\#H$ when $H$ is semisimple lies in the fact that the extension
\begin{align*}
	0\longrightarrow L_0=H\longrightarrow L_1\longrightarrow L_1/L_0=V\#H\longrightarrow0
\end{align*}
splits as an extension of $H$-Hopf bimodules. However, this does not necessarily hold if $H$ is non-semisimple. In this work, we introduce and study a replacement for $T(V)\#H$. Let $M$ be an $H$-module that fits in a short exact sequence of $H$-modules
\begin{align*}
	0\longrightarrow H\longrightarrow M\longrightarrow V\longrightarrow0,
\end{align*}
where we consider $H$ as a module over itself with the adjoint action. Let us identify $V$ with a linear complement of $H$ in $M$. Our main object of study, the fulcrum, is the algebra  $T(V)\#_MH$ generated by $V$ and $H$ with the commutation rule $hv=(h\_{1}\cdot v)h\_{2}$ for all $v\in V$ and $h\in H$; on the right-hand side, we consider the action of $H$ on $V$ inside $M$. Of course, if $M$ splits, $T(V)\#_MH$ coincides with the usual bosonization $T(V)\#H$. Otherwise, it may happen that $hv\notin VH$, since $V$ is not a submodule of $M$.

In Section \ref{sec:TVMH}, we show that $T(V)\#_MH=T(V)\ot H$ as vector spaces using Bergman's Diamond Lemma \cite{diamondlemma}. Furthermore, we specify the conditions under which $T(V)\#_MH$ admits a Hopf algebra structure such that $H$ is a Hopf subalgebra and the comultiplication of $v\in V$ is given by $\Delta(v)=v\ot1+v\_{-1}\ot v\_{0}$. Let now $M'$ be another extension of $V$ by $H$. We provide sufficient conditions on $M'$ ensuring that $T(V)\#_{M'}H$ is a $(T(V)\#_MH,T(V)\#H)$-bigalois extension, which in turn implies that $T(V)\#_MH$ is a cocycle deformation of $T(V)\#H$ by \cite[Theorem 3.9]{schauenburgL}. The aforementioned conditions are quite natural and are almost automatically satisfied when $H$ is a group algebra or when $H$ is finite-dimensional.

Let $L$ be a lifting of $\B(V)\#H$. In Section \ref{sec:lifting non semisimple}, we demonstrate that there exists an $H$-module extension $M$ such that $L$ is a quotient of $T(V)\#_MH$. Moreover, if there exists an $H$-module extension $M'$ as in the previous paragraph, one can replicate the strategy proposed in \cite{AAnMGV} to show that every lifting is a cocycle deformation of $\B(V)\#H$ starting from the triple $T(V)\#_MH$, $T(V)\#_{M'}H$ and $T(V)\#H$, see Figure \ref{fig:lifting via cocycle nonsemisimple}.

We conclude this article by applying the above in two situation where the coradical is not semisimple, see Section \ref{sec:example}. First, we classify all pointed liftings of the Fomin-Kirillov algebra $\cF\cK_3$ in characteristic $2$, and we show that they are all cocycle deformations of one another. These are new examples of digital quantum groups \cite{digital}. We find here a phenomenon typical of positive characteristic, which can be observed in other works such as \cite{xiong}. Namely, in characteristic zero, the liftings are parameterized by the scalars that deform the defining relations of the Nichols algebra, and these scalars turn out to be algebraically independent. In contrast, in positive characteristic, the corresponding scalars are not necessarily algebraically independent. Moreover, they are subject to relations that also involve scalars coming from the module $M$. This phenomenon makes the analysis considerably more difficult, see Remark \ref{rmk:tetrahedron}. In the second situation, we deal with an infinite-dimensional coradical. We prove that the Jordanian enveloping algebra of $\mathfrak{sl}(2)$ defined by Andruskiewitsch, Angiono and Heckenberger in \cite[(1.4)]{corrigendum} is a cocycle deformation of the bosonization of the Jordan plane and the group of the integer numbers. 

\subsection*{Acknowledgments} 

J. A. was supported by PUCP-CAP 2025-PI1273. C. V. was partially supported by CONICET PIP 11220200102916CO, Foncyt PICT 2020-SERIEA-02847 and Secyt-UNC. C. V. carried out part of this work while in residence at the Institute for Computational and Experimental Research in Mathematics in Providence, RI, during the Categorification and Computation in Algebraic Combinatorics semester program, supported by the National Science Foundation under Grant No. DMS-1929284. C. V. thanks Nicol\'as Andruskiewitsch for teaching him to lift.

\begin{figure}[h]
	\begin{tikzpicture}[scale=1,every node/.style={scale=1}]

		\node[align=center] (centrar) at (-5,0) {};
		
		\node[fill=white] (TVHl) at (-2.5,0) {$T(V)\#_MH$};
		
		\node[fill=white,label={above: {\tiny\it bigalois extensions}}] (TVH) at (0,0) {$T(V)\#_{M'}H$};
		
		\node[fill=white] (TVHr) at (2.5,0) {$T(V)\#H$};
		
		\node[fill=white,minimum width=1cm] (L) at (-2.5,-3) {$L$};
		
		\node[fill=white,minimum width=1cm] (A) at (0,-3) {$A$};
		
		\node[fill=white] (BVH) at (2.5,-3) {$\B(V)\#H$};
		
		\node[align=center] (cd) at (5,0) {\tiny\it cocycle \\ \tiny\it deformations};
		
		\node[align=center] (cd) at (5,-3) {\tiny\it cocycle \\ \tiny\it deformations};
		
		\draw[->>] (TVHl) to (-2.5,-.9);
		\draw[dotted] (-2.5,-1) to (-2.5,-1.9);
		\draw[->>] (-2.5,-2) to (L);
		
		\draw[->>] (TVH) to (0,-.9);
		\draw[dotted] (0,-1) to (0,-1.9);
		\draw[->>] (0,-2) to (A);
		
		\draw[->>] (TVHr) to (2.5,-.9);
		\draw[dotted] (2.5,-1) to (2.5,-1.9);
		\draw[->>] (2.5,-2) to (BVH);
		

		\begin{scope}[on background layer]
			\draw[dashed] (-3.5,0) to (6,0);

			\draw[dashed] (-3.5,-3) to (6,-3);
		\end{scope}
	\end{tikzpicture}
	\caption{Adaptation of the strategy from \cite{AAnMGV} to the non-semisimple case.}\label{fig:lifting via cocycle nonsemisimple}
\end{figure}

\section{The fulcrum}\label{sec:TVMH}

Throughout this section,  $H$ is a Hopf algebra with bijective antipode over a field $\ku$, and we consider $H$ as a module over itself via the adjoint action, i.e., $h\cdot k=h\_{1}kS(h\_{2})$ for all $h,k\in H$. Furthermore, $V$ denotes an $H$-module and $M$ is an extension of $V$ by $H$, that is, there exists a short exact sequence of $H$-modules of the form
\begin{align*}
	0\longrightarrow H\longrightarrow M\longrightarrow V\longrightarrow0.
\end{align*}
We fix a set of linearly independent elements $\fX=\{x_i\}_{i\in I}$ such that $M=\ku\fX\oplus H$ as vector spaces. By abuse of notation, we denote in the same way the basis of $V$ determined by these elements. We also fix a basis $\fH$ of $H$ such that $1\in\fH$. Thus, there are bilinear maps $a_i,b_t:H\times \ku\fX\longrightarrow\ku$, $i\in I$ and $t\in\fH$, such that
\begin{align}\label{eq:M}
	h\cdot x&=\sum_{i\in I}a_i(h,x)x_i+\sum_{t\in\fH}b_t(h,x)t
\end{align}
for all $h\in H$ and $x\in\ku\fX$. Naturally, the action of $h$ on $x$ when viewed as an element in $V$ is
\begin{align}
	h\cdot_Vx= \sum_{i\in I}a_i(h,x)x_i.
\end{align}

We denote by $T_H(M)$ the quotient of the tensor algebra $T(M)$ by the relations of $H$. More precisely, this is the algebra generated by the elements $\fX$ and $\fH$ subject to the relations
\begin{align}
	\label{eq:H mult}
	hk&=\sum_{t\in\fH}m_t(h,k)t,
\end{align}
for $h,k\in\fH$, where $m_t:H\times H\longrightarrow\ku$ are the bilinear maps expressing the multiplication of $H$ in the basis $\fH$. Clearly, $H$ is a subalgebra of $T_H(M)$.

The fulcrum which we aim to study is the following algebra.

\begin{definition}\label{def:TV M H}
	The algebra $T(V)\#_MH$ is the quotient of $T_H(M)$ by the ideal $J_M$ defined by the commutation rule
	\begin{align*}
		hx=(h\_{1}\cdot x)h\_{2}
	\end{align*}
	for all $x\in M$ and $h\in H$. When $M=V\oplus H$ as $H$-module, we set $J=J_{V\oplus H}$ and $T(V)\#_M H$ reduces to the usual bosonization denoted $T(V)\#H$.
\end{definition}

We notice that the commutation rule in the definition can be written more explicitly using \eqref{eq:M}. Also, this commutation rule is only relevant when $x\notin H$ since otherwise we have $(h\_{1}\cdot x)h\_{2}=h\_{1}xS(h\_{2})h\_{3}=hx$ which is simply the multiplication in $H$.

We begin by highlighting the following properties of $T(V)\#_MH$.

\begin{proposition}\label{teo:TV M H}
	Let $\langle\fX\rangle$ be the subset of $T(V)\#_MH$ formed by the monomials (or words) in $\fX$. Then
	\begin{enumerate}
		\item\label{item:basis of the fulcrum} $\langle\fX\rangle\fH=\{Ah\mid A\in\langle\fX\rangle,\,h\in\fH\}$ is a basis $T(V)\#_MH$.
		\item\label{item:phi} The linear endomorphism $\phi$ of $T(V)\#_MH$ defined by $\phi(Ah)=hA$, $A\in\langle\fX\rangle$ and $h\in\fH$, is an isomorphism.
		\item\label{item:fulcrum graduado} $T(V)\#_MH$ is a graded vector space whose homogeneous component of degree $n\in\N$ is $(T(V)\#_MH)_n=\{Ah\mid A\in\langle\fX\rangle\,\mbox{of length $n$},\,h\in\fH\}$.
		\item\label{item:fulcrum subalgebras} $T(V)$ and $H$ are subalgebras.
	\end{enumerate}
\end{proposition}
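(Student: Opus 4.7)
The plan is to prove (a) using Bergman's Diamond Lemma; parts (c) and (d) then follow from the basis, and (b) is handled by a filtration / associated-graded argument.

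For (a), I present $T(V)\#_MH$ as the free algebra $\ku\langle\fX\sqcup\fH\rangle$ modulo the reduction system
\begin{align*}
	hk&\;\longmapsto\;\sum_{t\in\fH}m_t(h,k)\,t,\qquad h,k\in\fH,\\
	hx&\;\longmapsto\;\sum_{i\in I}a_i(h\_{1},x)\,x_ih\_{2}+\sum_{t,r\in\fH}b_t(h\_{1},x)\,m_r(t,h\_{2})\,r,\qquad h\in\fH,\,x\in\fX,
\end{align*}
the second rule obtained by combining Definition~\ref{def:TV M H} with \eqref{eq:M} and \eqref{eq:H mult}. I order words by length, breaking ties lexicographically with every $\fH$-letter ranked above every $\fX$-letter. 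Both reductions then strictly decrease the order, and the irreducible words are precisely those of the form $Ah$ with $A\in\langle\fX\rangle$ and $h\in\fH$. There are two overlap ambiguities. The ambiguity $hkl$ with $h,k,l\in\fH$ resolves by associativity of $H$. For $hkx$ with $h,k\in\fH$ and $x\in\fX$, the key observation is that the commutation rule $hy=(h\_{1}\cdot y)h\_{2}$ extends from $y\in\fX$ to all $y\in M$: when $y\in H$, the right-hand side equals $h\_{1}yS(h\_{2})h\_{3}=hy$, i.e., the product in $H$. With this,
\begin{align*}
	h(kx)&=h(k\_{1}\cdot x)k\_{2}=\bigl(h\_{1}\cdot(k\_{1}\cdot x)\bigr)h\_{2}k\_{2},\\
	(hk)x&=\bigl((hk)\_{1}\cdot x\bigr)(hk)\_{2}=(h\_{1}k\_{1}\cdot x)h\_{2}k\_{2},
\end{align*}
and the two agree by the $H$-module axiom $h\cdot(k\cdot x)=(hk)\cdot x$ in $M$ combined with $\Delta(hk)=h\_{1}k\_{1}\ot h\_{2}k\_{2}$. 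Bergman's Diamond Lemma \cite{diamondlemma} then yields (a).

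Item (c) is the length decomposition arising from the basis, and for (d) the subsets $\{A\cdot1\}_{A\in\langle\fX\rangle}$ and $\{1\cdot h\}_{h\in\fH}$ lie inside that basis, so the natural maps $T(V)\hookrightarrow T(V)\#_MH$ and $H\hookrightarrow T(V)\#_MH$ are injective. For (b), $\phi$ is well-defined and linear since $\langle\fX\rangle\fH$ is a basis. Commuting $h$ past each letter of $A=x_{i_1}\cdots x_{i_n}$ via the rule above produces terms of $\fX$-degree at most $n$; the $b_t$-contributions strictly lower the degree, so $\phi$ preserves the filtration $F_n:=\bigoplus_{k\le n}(T(V)\#_MH)_k$. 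Modulo $F_{n-1}$, every application of the commutation rule keeps only the $\fX$-component of $h\_{1}\cdot x$, so $\phi$ induces on $F_n/F_{n-1}$ the classical bosonization map $Ah\mapsto(h\_{1}\cdot A)h\_{2}$ with $H$ acting diagonally on $V^{\ot n}$. This induced map is bijective, with inverse $Ah\mapsto(S^{-1}(h\_{1})\cdot A)h\_{2}$, thanks to the Hopf identity $h\_{2}S^{-1}(h\_{1})=\varepsilon(h)1$ (valid because $S$ is bijective); hence $\phi$ is bijective on each graded piece and, by induction along the filtration $F_\bullet$, on all of $T(V)\#_MH$.

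The main obstacle is the Diamond-Lemma ambiguity for $hkx$. The cleanest route passes through the observation that Definition~\ref{def:TV M H} actually encodes a relation between $H$ and all of $M$, rather than between $H$ and $\fX$ alone; this lets the $H$-module axiom for $M$ close the resolution, without having to track the $\ku\fX$- and $H$-components of $k\_{1}\cdot x$ separately.
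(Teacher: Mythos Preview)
Your proof is correct and follows essentially the same approach as the paper: the Diamond Lemma with the deg-lex order and the same two overlap ambiguities for (a), from which (c) and (d) follow. The one minor difference is in (b): where you pass to the associated graded of the length filtration and invert the induced map $Ah\mapsto(h\_{1}\cdot_V A)h\_{2}$ on each $V^{\ot n}\ot H$, the paper observes directly that $T(V)\#_MH$ is an $H$-module algebra under the adjoint action, so $\phi(Ah)=hA=(h\_{1}\cdot A)h\_{2}$ globally, and writes down the explicit inverse $\phi^{-1}(Ah)=(S^{-1}(h\_{2})\cdot A)h\_{1}$ in one stroke.
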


\begin{proof}
We will apply Bergman's Diamond Lemma to prove \ref{item:basis of the fulcrum}. We first equip $\fH\cup\fX$ with an order such that $h>x$ for all $h\in\fH$ and $x\in\fX$, and extend this to the set of words in $\fH\cup\fX$ using the degree lexicographic order. Now, $T(V)\#_MH$ is defined by the reduction system $S$ consisting of the pairs
	\begin{align*}
		\left(hk,\,\sum_{t\in\fH}m_t(h,k)t\right)\,\mbox{ and }\,
		\left(hx,\,\sum_{i\in I}a_i(h\_{1},x)x_ih\_{2}+\sum_{t\in\fH}b_t(h\_{1},x)th\_{2}\right)
	\end{align*}
	for all $h,k\in\fH$ and $x\in\fX$. Clearly, the degree lexicographic order is compatible with $S$. The only possible ambiguities arise from monomials of the form $hkt$ and $hkx$, with $h,k,t\in\fH$ and $x\in\fX$. These are resolvable because of the associativity of the multiplication of $H$ and the associativity of the action of $H$ on  $M$. Since the $S$-irreducible monomials are precisely those in (a), these form a basis of $T(V)\#_MH$ by \cite[Theorem 1.2]{diamondlemma}, and this fact implies \ref{item:fulcrum graduado} and \ref{item:fulcrum subalgebras}.
	
	In order to prove (b), we observe that $T(V)\#_MH$ is an $H$-module algebra via the adjoint action. With this in mind, we can write in $T(V)\#_MH$
	\begin{align}\label{eq:h conmutando con una palabra}
		hx_{i_1}\cdots x_{i_n}=(h\_{1}\cdot x_{i_1})\cdots(h\_{n}\cdot x_{i_n})h\_{n+1}=h\_{1}\cdot(x_{i_1}\cdots x_{i_n})h\_{2},
	\end{align}
    for all $h\in H$ and $x_{i_1}, ..., x_{i_n}\in M$. Thus, $\phi(Ah)=hA=(h\_{1}\cdot A)h\_{2}$ is bijective with inverse $\phi^{-1}(Ah)=(S^{-1}(h\_{2})\cdot A)h\_{1}$.
\end{proof}

\begin{remark}
	Let $I$ be an ideal of $T(V)$ and set $R=T(V)/I$. Assuming that $I$ behaves well under the $H$-action, one can prove that $T(V)\#_MH/\langle I\rangle\simeq R\ot H$ as vector spaces. Thus, one could define $R\#_MH:=T(V)\#_MH/\langle I\rangle$ and thinks of it as a generalization of the Ore extensions. It would be interesting to investigate which properties of the Ore extensions can be extended to this more general construction.
\end{remark}

\subsection{A Hopf algebra structure for the fulcrum}\label{subsec:hopf for}

Here we also assume that $V$ is a Yetter-Drinfeld module over $H$ with coaction $\delta$. We endow $T_H(M)$ with the unique Hopf algebra structure such that $H$ is a Hopf subalgebra and the comultiplication of $v\in V$ is
\begin{align}\label{eq:comultiplication TVHM}
	\Delta(v)=v\ot1+\delta(v).
\end{align}

\begin{proposition}
	Under the above assumptions, $T(V)\#_MH$ is a Hopf algebra quotient of $T_H(M)$ if and only if
	\begin{align}
		\label{eq:hopf TVMH}
		\begin{split}
			\sum_{t\in\fH}b_t(h\_{1},x)t\_{1}h\_{2}\ot t\_{2}h\_{3}&=\\
			=\sum_{t\in\fH}b_t(h\_{1},x)th\_{2}&\ot h\_{3}+h\_{1}x\_{-1}\ot b_t(h\_{2},x\_{0})th\_{3}
		\end{split}
	\end{align}
	for all $h\in H$ and $x\in\ku\fX$. In such a case, the coradical of $T(V)\#_MH$ coincides with the coradical of $H$.
\end{proposition}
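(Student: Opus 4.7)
The plan is to check that $J_M$ is a Hopf ideal of $T_H(M)$ by verifying the compatibility conditions on its generators $r_{h,x} := hx - (h_{(1)} \cdot x) h_{(2)}$ for $h \in H$ and $x \in \ku\fX$. The key step is to compute $\Delta(r_{h,x})$ modulo $J_M \otimes T_H(M) + T_H(M) \otimes J_M$ and to isolate the condition under which this vanishes. A crucial observation is that by Proposition~\ref{teo:TV M H}, the inclusion $H \otimes H \hookrightarrow T(V)\#_M H \otimes T(V)\#_M H$ is injective, so any element of $H \otimes H$ that vanishes in the quotient already vanishes in $H \otimes H$; this will translate the Hopf ideal condition into an honest identity in $H \otimes H$.

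For the main calculation, I would start from the bialgebra structure of $T_H(M)$: writing $\Delta(x) = x \otimes 1 + x_{-1} \otimes x_{(0)}$, we have
\begin{align*}
	\Delta(hx) = h_{(1)} x \otimes h_{(2)} + h_{(1)} x_{-1} \otimes h_{(2)} x_{(0)}.
\end{align*}
Modulo $J_M$ both $h_{(1)} x$ and $h_{(2)} x_{(0)}$ can be replaced via the commutation rule, further decomposed as $h \cdot x = h \cdot_V x + \sum_t b_t(h, x) t$ using the splitting $M = \ku\fX \oplus H$. Expanding in the same way $\Delta((h_{(1)} \cdot_V x) h_{(2)})$ and $\Delta(\sum_t b_t(h_{(1)},x) t h_{(2)})$ in $T_H(M) \otimes T_H(M)$, and invoking the Yetter--Drinfeld compatibility
\begin{align*}
	h_{(1)} x_{-1} \otimes h_{(2)} \cdot_V x_{(0)} = (h_{(1)} \cdot_V x)_{-1} h_{(2)} \otimes (h_{(1)} \cdot_V x)_{(0)},
\end{align*}
the contributions coming from the $V$-part of $h \cdot x$ cancel out. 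What remains is purely in $H \otimes H$ and agrees, after reindexing, with the difference between the two sides of \eqref{eq:hopf TVMH}. Hence compatibility with the comultiplication is equivalent to that identity. The counit condition $\epsilon(r_{h,x}) = 0$ reduces to $\sum_t b_t(h,x)\epsilon(t) = 0$, which follows by applying $\epsilon \otimes \epsilon$ to \eqref{eq:hopf TVMH} and using the coaction axiom $\epsilon(x_{-1}) x_{(0)} = x$; stability of $J_M$ under the antipode is then automatic, since $T(V)\#_M H$ is generated by the Hopf subalgebra $H$ together with elements whose coproducts are skew-primitive.

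For the statement on the coradical, I would consider the filtration $F_n := \bigoplus_{k \leq n} (T(V)\#_M H)_k$ coming from Proposition~\ref{teo:TV M H}. Note that $T(V)\#_M H$ is \emph{not} graded as an algebra, because the commutation rule $hx = (h_{(1)} \cdot_V x)h_{(2)} + \pi_H(h_{(1)}\cdot x)h_{(2)}$ mixes degrees $1$ and $0$. Nevertheless, $\{F_n\}_{n \geq 0}$ is a Hopf algebra filtration: one checks $F_i F_j \subseteq F_{i+j}$ because the defective summand $\pi_H(h_{(1)} \cdot x) h_{(2)}$ lies in $F_0 \subseteq F_1$; and $\Delta(F_n) \subseteq \sum_{i+j=n} F_i \otimes F_j$ since on generators $\Delta(h) \in F_0 \otimes F_0$ and $\Delta(v) = v \otimes 1 + v_{-1} \otimes v_{(0)} \in F_1 \otimes F_0 + F_0 \otimes F_1$. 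By the standard result for filtered coalgebras, the coradical of $T(V)\#_M H$ is contained in $F_0 = H$, and it obviously contains the coradical of $H$; hence they coincide.

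The main obstacle will be the careful Sweedler-index bookkeeping in the expansion of $\Delta(r_{h,x})$, and invoking the Yetter--Drinfeld identity so as to correctly pair the right tensorands; the clean feature that makes the argument work is that all non-cancelling terms happen to lie in $H \otimes H$, which reduces the verification to the single identity \eqref{eq:hopf TVMH} in that subspace.
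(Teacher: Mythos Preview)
Your proof is correct and follows the same approach as the paper's: the paper simply states that ``the equivalence follows by comparing $\Delta(hx)$ and $\Delta((h_{(1)}\cdot x)h_{(2)})$'' and then invokes the coalgebra filtration $C_n=\oplus_{i\le n}(T(V)\#_MH)_i$ together with \cite[Proposition 11.1.1]{sweelder} for the coradical, whereas you spell out that comparison in full (including the role of the Yetter--Drinfeld compatibility in cancelling the $V$-terms and the use of Proposition~\ref{teo:TV M H} to detect the remaining $H\otimes H$ obstruction) and verify the filtration property explicitly. Your treatment of the counit and antipode conditions is a welcome addition that the paper leaves implicit.
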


\begin{proof}
	The equivalence follows by comparing $\Delta(hx)$ and $\Delta((h\_{1}\cdot x)h\_{2})$. Now, if $T(V)\#_MH$ is a Hopf algebra, then $C_n=\oplus_{i=0}^n(T(V)\#_MH)_i$ is a coalgebra filtration. Therefore $C_0=H$ contains the coradical of $T(V)\#_MH$ by \cite[Proposition 11.1.1]{sweelder}.
\end{proof}

Let $M'=\ku\{y_i\}_{i\in I}\oplus H$ be an extension of $V$ by $H$ determined by the identification of $y_i$ with $x_i$ for all $i\in I$, and the action
\begin{align}\label{eq:M'}
	h\cdot y=h\cdot_Vy+\sum_{t\in\fH}b_t'(h,y)t
\end{align}
for all $h\in H$ and $y\in \ku\{y_i\}_{i\in I}$. Let $M''=\ku\{z_i\}_{i\in I}\oplus H$ be the direct sum of $V$ and $H$ where we identify $z_i$ with $x_i$ for all $i\in I$. We also transfer the coaction $\delta$ to $\ku\{y_i\}_{i\in I}$ and $\ku\{z_i\}_{i\in I}$ via the identifications above. Likewise, we transfer $b'_t$ to $H\times\ku\fX$. We adopt the same notation for the elements of $H$ as viewed in $T(V)\#_{M}H$, $T(V)\#_{M'}H$ or $T(V)\#H=T(V)\#_{M''}H$.

These three algebras are connected with each other, as stated in the following proposition, provided the equations
\begin{align}
	\label{eq:righ comodule}
	\sum_{t\in\fH}b'_t(h\_{1},x)t\_{1}h\_{2}\ot t\_{2}h\_{3}
	&=\sum_{t\in\fH}b'_t(h\_{1},x)th\_{2}\ot h\_{3}\quad\mbox{and}
	\\
	\noalign{\smallskip}
	\label{eq:left comodule}
	\begin{split}
		\sum_{t\in\fH}b'_t(h\_{1},x)t\_{1}h\_{2}\ot t\_{2}h\_{3}&=\\
		=\sum_{t\in\fH}b_t(h\_{1},x)th\_{2}&\ot h\_{3}+h\_{1}x\_{-1}\ot b'_t(h\_{2},x\_{0})th\_{3}.
	\end{split}
\end{align}
hold for all $h\in H$ and $x\in\ku\fX$.

\begin{lemma}\label{le:coactions}
	Assume \eqref{eq:righ comodule} and \eqref{eq:left comodule} hold for all $h\in H$ and $x\in\ku\fX$. Then $T(V)\#_{M'}H$ is a right (resp. left) comodule algebra over $T(V)\#H$ (resp. $T(V)\#_{M}H$). The
	right and left coactions are given by
	\begin{align*}
		\rho_r(h)&=h\ot h,\quad\rho_r(y)=y\ot1+\delta(z)\quad\mbox{and}
		\\
		\rho_l(h)&=h\ot h,\quad\rho_l(y)=x\ot1+\delta(y)
	\end{align*}
	for all $h\in H$ and $y\in\ku\{y_i\}_{i\in I}$.
\end{lemma}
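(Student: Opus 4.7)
The plan is to define $\rho_r$ and $\rho_l$ as algebra homomorphisms on the tensor algebra $T_H(M')$ by prescribing their values on the generating set $\fX\cup\fH$ as in the statement, and then verify that these maps (i) descend to the quotient $T(V)\#_{M'}H$, (ii) satisfy the counit axiom, and (iii) satisfy coassociativity. Since $\rho_r$ and $\rho_l$ agree with $\Delta_H$ on $H$, the relations \eqref{eq:H mult} are preserved automatically, so only the commutation rule from the ideal $J_{M'}$ needs attention.

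The heart of (i) is to show that the commutation relations $hy=(h\_{1}\cdot y)h\_{2}$ are sent to zero. Using \eqref{eq:M'}, the right-hand side splits as $(h\_{1}\cdot_V y)h\_{2}+\sum_t b'_t(h\_{1},y)\,t\,h\_{2}$. Applying $\rho_r$ to both $hy$ and this expansion, and reducing the resulting tensors by the commutation rules of $T(V)\#_{M'}H$ on the left factor and of $T(V)\#H$ on the right factor (where the extension is trivial, so no $b''_t$-terms appear), the check reduces, after invoking the Yetter--Drinfeld compatibility $\delta(h\cdot_V z)=h\_{1}z_{-1}S(h\_{3})\ot h\_{2}\cdot_V z_0$ to handle the $V$-pieces, exactly to the hypothesis \eqref{eq:righ comodule}. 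A symmetric computation for $\rho_l$ uses the commutation rule of $T(V)\#_MH$ on the left factor (which contributes the $b_t$-terms) and reduces to \eqref{eq:left comodule}.

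For (ii), the counit axiom on $\fH$ is the bialgebra axiom of $H$, while on $y\in\ku\{y_i\}_{i\in I}$ it follows from $(\id\ot\e_H)\delta(z)=\e_H(z_{-1})z_0=z$ together with the identification of $z$ with $y$. For (iii), coassociativity on $\fH$ is coassociativity of $\Delta_H$. For $y$, expanding $(\rho_r\ot\id)\rho_r(y)$ and $(\id\ot\Delta_{T(V)\#H})\rho_r(y)$ yields $y\ot 1\ot 1 + z_{-1}\ot z_0\ot 1$ plus $(z_{-1})\_{1}\ot(z_{-1})\_{2}\ot z_0$ and $z_{-1}\ot(z_0)_{-1}\ot(z_0)_0$ respectively, which agree by the YD coaction axiom $(\Delta_H\ot\id)\delta=(\id\ot\delta)\delta$; the case of $\rho_l$ is analogous.

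The principal obstacle is the Sweedler-notation bookkeeping in step (i): one must carefully split the $H$-action on $y$ into its $V$-component and its $H$-valued $b'_t$-contribution, propagate the coaction $\delta$ through the $V$-component via the Yetter--Drinfeld identity, and verify that the non-YD residue is precisely described by the hypotheses \eqref{eq:righ comodule} and \eqref{eq:left comodule}. Nothing deep occurs beyond this, but the relabeling of Sweedler indices must be tracked with care.
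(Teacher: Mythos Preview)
Your proposal is correct and follows essentially the same route as the paper: both arguments reduce to checking that the generators of $J_{M'}$ land in $J_{M'}\ot T_H(M'')+T_H(M')\ot J_{M''}$ (resp.\ $J_M\ot T_H(M')+T_H(M)\ot J_{M'}$) under the comultiplication-like map, which is exactly where \eqref{eq:righ comodule} and \eqref{eq:left comodule} enter. The paper is slightly slicker only in that it first observes $T_H(M)=T_H(M')=T_H(M'')$ as Hopf algebras, so $\rho_r$ and $\rho_l$ are literally the comultiplication $\Delta$ at that level---making your steps (ii) and (iii) automatic and leaving only your step (i) to check.
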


\begin{proof}
	We observe that $T_H(M')=T_H(M'')=T_H(M)$ and hence we can consider $T_H(M')$ as a right and a left coalgebra over $T_H(M'')$ and $T_H(M)$ using the comultiplication. A straightforward computation using \eqref{eq:righ comodule} shows that $\Delta(J_{M'})$ is contained in $J_{M'}\ot T_H(M'')+T_H(M')\ot J_{M''}$. This implies that $\rho_r$ is an algebra map and a coaction. The analogous assertion for $\rho_l$ follows using  \eqref{eq:left comodule}.
\end{proof}

Further assumptions on $M$ and $M'$ are needed to obtain a bigalois extension. Consider the equations 
\begin{align}
	\label{eq:righ galois}
	&S(h\_{1})\ot
	\sum_{t\in\fH}b'_t(h\_{2},x)th\_{3}
	=\sum_{t\in\fH}b'_t(h\_{2},x)S(h\_{1})t\ot h\_{3}\quad\mbox{and}
	\\
	\noalign{\smallskip}
	\label{eq:left galois}
	\begin{split}
		&h\_{1}x\_{-2}\ot S(x\_{-1})\sum_{t\in\fH}b'_t(h\_{3},x\_{0})S(h\_{2})t=\\
		&=\sum_{t\in\fH}b'_t(h\_{1},x)th\_{2}\ot S(h\_{3})-\sum_{t\in\fH}b_t(h\_{1},x)t\_{1}h\_{2}\ot S(t\_{2}h\_{3})
	\end{split}
\end{align}
for $h\in H$ and $x\in\ku\fX$. Although equations \eqref{eq:righ comodule}-\eqref{eq:left galois} may look unmanageable, we will see in the next subsections they are almost immediate in some cases. 

\begin{theorem}\label{thm:fulcrum bigalois}
	If Equations \eqref{eq:righ comodule}, \eqref{eq:left comodule}, \eqref{eq:righ galois} and \eqref{eq:left galois} hold, then $T(V)\#_{M} H$ is a $(T(V)\#_MH,T(V)\#H)$-bigalois extension. Consequently, $T(V)\#_{M} H$ is a cocycle deformation of $T(V)\#H$ by \cite[Theorem 3.9]{schauenburgL}.
\end{theorem}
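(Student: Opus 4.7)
The plan is to verify that $A := T(V)\#_M H$, endowed with the coactions produced by Lemma \ref{le:coactions}, is simultaneously a $(T(V)\#_MH, T(V)\#H)$-bicomodule algebra and a Hopf-Galois extension on the left and on the right. Conditions \eqref{eq:righ comodule} and \eqref{eq:left comodule} are exactly what Lemma \ref{le:coactions} requires to produce the coactions $\rho_r\colon A \to A \otimes T(V)\#H$ and $\rho_l\colon A \to T(V)\#_MH \otimes A$, so the bicomodule algebra structure is essentially handed to us. The bicomodule compatibility $(\id\otimes\rho_r)\rho_l = (\rho_l\otimes\id)\rho_r$ can then be checked on the PBW basis $\langle\fX\rangle\fH$ from Proposition \ref{teo:TV M H}: on elements of $\fH$ it is the coassociativity of the comultiplication of $H$, and on a generator $x\in\fX$ it follows from the Yetter-Drinfeld coassociativity of $\delta$ together with the uniform shape $y\mapsto y\otimes 1 + \delta(y)$ of both coactions on the $V$-part.

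The crux is establishing bijectivity of the canonical Galois maps $\beta_r\colon A\otimes_{B_r} A \to A\otimes T(V)\#H$ and $\beta_l\colon A\otimes_{B_l} A \to T(V)\#_MH \otimes A$, where $B_r,B_l$ denote the respective subalgebras of coinvariants. The strategy is to construct explicit translation maps $\tau_r\colon T(V)\#H \to A\otimes A$ and $\tau_l\colon T(V)\#_MH \to A\otimes A$ on generators and extend appropriately. On $h\in H$ one takes the standard Hopf formulas $\tau_r(h) = S(h_{(1)})\otimes h_{(2)}$ and $\tau_l(h) = h_{(1)}\otimes S(h_{(2)})$; on a generator $x\in\fX$ the naive candidate $\tau_r(x)=1\otimes x - S(x_{(-1)})\otimes x_{(0)}$ (which is the right inverse of $\rho_r(x)=x\otimes 1 + \delta(x)$ in the bosonization case) must be modified by correction terms built from $b_t$ and $b'_t$ to compensate for the discrepancy between the extensions $M$, $M'$ and the trivial one.

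The principal obstacle is to verify that the proposed $\tau_r$ and $\tau_l$ descend to well-defined maps on their target Hopf algebras, i.e., that they vanish when applied to the defining relations $hx-(h_{(1)}\cdot x)h_{(2)}$ of the fulcrum. Expanding $\tau_r$ on such a relation and grouping terms produces, on one side, the expression $S(h_{(1)})\otimes \sum_t b'_t(h_{(2)},x)\,t h_{(3)}$ and, on the other, $\sum_t b'_t(h_{(2)},x)\,S(h_{(1)})t\otimes h_{(3)}$; equation \eqref{eq:righ galois} is precisely the identity that forces these to agree after applying the Galois structure. An entirely parallel computation for $\tau_l$, in which $\delta(x)$ enters on the left tensor slot and the antipode is applied on the right slot, collapses via equation \eqref{eq:left galois}. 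Once well-definedness of the translation maps is secured, bijectivity of $\beta_r$ and $\beta_l$ follows by a standard check on the basis of Proposition \ref{teo:TV M H}, and the cocycle deformation statement then follows immediately from \cite[Theorem 3.9]{schauenburgL}.
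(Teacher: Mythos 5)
Your outline is correct and, in substance, it is the same argument as the paper's: both rest on the observation that the canonical map admits the explicit inverse $c\ot d\mapsto cS(d_{(1)})\ot d_{(2)}$ coming from the Hopf structure of the free object, and that \eqref{eq:righ galois} and \eqref{eq:left galois} are exactly the compatibility of this inverse with the deformed commutation relations, just as \eqref{eq:righ comodule} and \eqref{eq:left comodule} are the compatibility of the map itself via Lemma \ref{le:coactions}. The difference is one of packaging, and the paper's packaging disposes of the two loose ends you leave open. Instead of defining translation maps $\tau_r,\tau_l$ on generators, extending them ``appropriately'' (you would need them to be algebra maps into $A^{\op}\ot A$, and you would still have to prove that they actually invert $\beta_r,\beta_l$ --- your closing ``standard check on the basis'' is doing real work there), the paper performs the entire computation upstairs: since $T_H(M')=T_H(M'')=T_H(M)$ as algebras, the comultiplication already makes $T_H(M')$ a bigalois object with $\widetilde\kappa_r(c\ot d)=cd_{(1)}\ot d_{(2)}$ and $\widetilde\kappa_r^{-1}(c\ot d)=cS(d_{(1)})\ot d_{(2)}$ both manifestly bijective, and all that remains is to check that $\widetilde\kappa_r$ carries $J_{M'}\ot T_H(M')+T_H(M')\ot J_{M'}$ onto $J_{M'}\ot T_H(M'')+T_H(M')\ot J_{M''}$ (one inclusion by \eqref{eq:righ comodule}, the reverse by applying \eqref{eq:righ galois} to $\widetilde\kappa_r^{-1}$), whence $\kappa_r$ descends to a bijection; likewise for $\kappa_l$ using \eqref{eq:left comodule} and \eqref{eq:left galois}. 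Two minor points: the object carrying both coactions is $T(V)\#_{M'}H$, not $T(V)\#_MH$ (a typo in the statement that your $A:=T(V)\#_MH$ inherits), and since the paper works in Schauenburg's Galois-object formalism the canonical maps are defined on $A\ot A$ rather than on $A\ot_{B_r}A$.
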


\begin{proof}
	We must prove that the corresponding Galois maps $\kappa_r$ and $\kappa_l$ are bijective, see for instance \cite[Section 2]{schauenburgL}. We will deduce this from the fact that $T_H(M')$ is a $(T_H(M'),T_H(M''))$-bigalois extension via the comultiplication. Indeed, let $\widetilde{\kappa}_r:T_H(M')\ot T_H(M')\longrightarrow T_H(M')\ot T_H(M'')$, $c\ot d\mapsto cd\_{1}\ot d\_{2}$, be the right Galois map for this extension. From the definition of $\rho_r$, we see that the bijectivity of $\kappa_r$ can be deduced by showing that
	\begin{align*}
		\widetilde{\kappa}_r\bigl(J_{M'}\ot T_H(M')+T_H(M')\ot J_{M'}\bigr)=J_{M'}\ot T_H(M'')+T_H(M')\ot J_{M''}.
	\end{align*}
	Clearly, $\widetilde{\kappa}_r(J_{M'}\ot T_H(M'))\subset J_{M'}\ot T_H(M'')$, and $\widetilde{\kappa}_r(T_H(M')\ot J_{M'})\subset J_{M'}\ot T_H(M'')+T_H(M')\ot J_{M''}$ by \eqref{eq:righ comodule}. This proves the left-hand side is contained in the right-hand side. On the other hand, the inverse of $\widetilde{\kappa}_r$ is given by $c\ot d\mapsto cS(d\_{1})\ot d\_{2}$. Then, $\widetilde{\kappa}_r^{-1}(J_{M'}\ot T_H(M''))\subset J_{M'}\ot T_H(M')$, and a straightforward computation using \eqref{eq:righ galois} shows that $\widetilde{\kappa}_r^{-1}(T_H(M')\ot J_{M''})\subset J_{M'}\ot T_H(M')+T_H(M')\ot J_{M'}$. This proves the reverse inclusion, and hence $\kappa_r$ is bijective. The bijectivity of $\kappa_l$ can be proved similarly using \eqref{eq:left comodule} and \eqref{eq:left galois}. 
\end{proof}

\subsubsection{The pointed case}\label{subsubsec:the pointed case}

The conditions \eqref{eq:hopf TVMH}, \eqref{eq:righ comodule} and \eqref{eq:left comodule} are easy to verify when $H=\ku G$ is a group algebra. Indeed, we can take $\fH=G$ and assume that $\delta(x_i)=g_i\ot x_i$ for certain $g_i\in G$, $i\in I$. Hence \eqref{eq:hopf TVMH} holds if and only if
\begin{align}\label{eq:M pointed case}
	g\cdot x_i=g\cdot_V x_i+b(g,x_i)(1-g\cdot g_i),
\end{align}
with $b=b_1$, while \eqref{eq:left comodule} and \eqref{eq:righ comodule} hold if and only if
\begin{align}\label{eq:M' pointed case}
	g\cdot y_i=g\cdot_V y_i+b(g,x_i),
\end{align}
for all $g\in G$ and $i\in I$. Moreover, \eqref{eq:M pointed case} makes $M$ an extension of $\ku G$-module if and only if \eqref{eq:M' pointed case} does the same for $M'$, and this equivalent to
\begin{align}\label{eq:M extension pointed case}
	b(gh,x_i)=b(g,h\cdot_Vx_i)+b(h,x_i)
\end{align}
for all $g,h\in G$ and $i\in I$. We observe that these extensions satisfy \eqref{eq:righ galois} and \eqref{eq:left galois} immediately. Summarizing, we have the following.

\begin{cor}\label{cor:TVMH pointed case}
	The following statements are equivalent:
	\begin{enumerate}
		\item $M$ is an extension of $V$ by $\ku G$ with action as in \eqref{eq:M pointed case}.
		\item Equation \eqref{eq:M extension pointed case} holds.
		\item $T(V)\#_M\ku G$ is a pointed Hopf algebra quotient of $T_H(M)$.
	\end{enumerate}
	If these statements hold, then $T(V)\#_{M'}\ku G$ is a $(T(V)\#_M\ku G,T(V)\#\ku G)$-bigalois extension, and $T(V)\#_M\ku G$ is a cocycle deformation of $T(V)\#\ku G$. \qed
\end{cor}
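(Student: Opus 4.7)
The plan is to assemble the corollary from the machinery already developed: the Hopf-algebra criterion \eqref{eq:hopf TVMH}, Theorem \ref{thm:fulcrum bigalois}, and the observation that all the sums indexed by $\fH$ collapse drastically when $H=\ku G$, since each $t\in G$ is grouplike. The whole computation rests on a single structural remark: for an action of the form \eqref{eq:M pointed case}, the bilinear maps $b_t(g,x_i)$ from the general shape \eqref{eq:M} vanish except on $t=1$ and $t=gg_ig^{-1}$, where they take the opposite values $b(g,x_i)$ and $-b(g,x_i)$. I will use this fact repeatedly without further mention.

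For (a)$\Leftrightarrow$(b), I would unpack the module axiom $(gh)\cdot x_i=g\cdot(h\cdot x_i)$ using the prescribed action \eqref{eq:M pointed case}. The Yetter--Drinfeld compatibility forces $h\cdot_V x_i$ to lie in the span of basis vectors whose coaction has degree $hg_ih^{-1}$, so by linearity of $b$ in the second slot the element $b(g,h\cdot_V x_i)$ multiplies the factor $(1-g\cdot(hg_ih^{-1}))=(1-(gh)\cdot g_i)$. Collecting terms, $(gh)\cdot x_i-g\cdot(h\cdot x_i)$ reduces to $\bigl(b(gh,x_i)-b(g,h\cdot_V x_i)-b(h,x_i)\bigr)(1-(gh)\cdot g_i)$, which vanishes identically precisely when \eqref{eq:M extension pointed case} holds; the degenerate case $g_i=1$ is vacuous, because then $b$ plays no role in \eqref{eq:M pointed case} at all.

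For (a)$\Leftrightarrow$(c), I would invoke the Hopf-algebra proposition preceding the statement. Expanding both sides of \eqref{eq:hopf TVMH} with $h=g\in G$, $\delta(x_i)=g_i\ot x_i$ and summing over the two nonvanishing values of $t$, the resulting identity in $\ku G\ot\ku G$ telescopes to zero. Conversely, starting from a general action of the form \eqref{eq:M} and matching coefficients of independent basis vectors (such as $gg_i\ot g$ and $g\ot gg_i$) forces $b_t(g,x_i)$ to have exactly the support and opposite-sign pattern dictated by \eqref{eq:M pointed case}. Pointedness of $T(V)\#_M\ku G$ is then immediate from the coradical identity in the same proposition, since $\corad(\ku G)=\ku G$ is spanned by grouplike elements.

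Finally, for the bigalois and cocycle-deformation conclusion, I would verify conditions \eqref{eq:righ comodule}--\eqref{eq:left galois} one by one. Each sum over $\fH$ again reduces to the two values $t\in\{1,\,gg_ig^{-1}\}$, and grouplikeness collapses every $t_{(1)}\ot t_{(2)}$ to $t\ot t$, so each condition becomes a short tensor-algebra rearrangement in $\ku G\ot\ku G$. I expect \eqref{eq:left galois} to be the most demanding one, because of the interplay between $b$ (for $M$) and $b'$ (for $M'$); however, \eqref{eq:M pointed case} and \eqref{eq:M' pointed case} share the same $b$, and the antipode factors $S(h_{(i)})=h^{-1}$ combine with the two nonzero summands to give a clean cancellation. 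Once the four conditions are in place, Theorem \ref{thm:fulcrum bigalois} immediately delivers the $(T(V)\#_M\ku G,T(V)\#\ku G)$-bigalois extension, and the cocycle-deformation statement follows via \cite[Theorem 3.9]{schauenburgL} exactly as quoted in that theorem.
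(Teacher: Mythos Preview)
Your proposal is correct and follows essentially the same route as the paper. The paper's argument is the short paragraph preceding the corollary: it notes that \eqref{eq:hopf TVMH} reduces to \eqref{eq:M pointed case}, that \eqref{eq:righ comodule}--\eqref{eq:left comodule} reduce to \eqref{eq:M' pointed case}, that the module axiom for either action is \eqref{eq:M extension pointed case}, and that \eqref{eq:righ galois}--\eqref{eq:left galois} are then immediate, after which Theorem~\ref{thm:fulcrum bigalois} applies; you simply spell out these reductions (the two-term support of $b_t$, the Yetter--Drinfeld degree argument for $h\cdot_V x_i$, the telescoping in \eqref{eq:hopf TVMH}) in more detail than the paper does.
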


\subsubsection{The finite-dimensional case}

In this case Equations \eqref{eq:righ galois} and \eqref{eq:left galois} are not necessary.

\begin{theorem}\label{thm:fulcrum bigalois fin dim}
	If $H$ is finite-dimensional and Equations \eqref{eq:righ comodule} and \eqref{eq:left comodule} hold, then $T(V)\#_{M} H$ is a $(T(V)\#_MH,T(V)\#H)$-bigalois extension. Consequently, $T(V)\#_{M} H$ is a cocycle deformation of $T(V)\#H$.
\end{theorem}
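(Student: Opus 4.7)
The argument will follow the blueprint of Theorem~\ref{thm:fulcrum bigalois}. By Lemma~\ref{le:coactions}, the hypotheses \eqref{eq:righ comodule} and \eqref{eq:left comodule} already provide the bicomodule algebra structure of $T(V)\#_{M'}H$ over $\bigl(T(V)\#_{M}H,\,T(V)\#H\bigr)$, so the task reduces to showing that the Galois maps $\kappa_r$ and $\kappa_l$ are bijective. I will reuse the proof of Theorem~\ref{thm:fulcrum bigalois} to get surjectivity: the forward inclusion
\[\widetilde{\kappa}_r\bigl(J_{M'}\otimes T_H(M')+T_H(M')\otimes J_{M'}\bigr)\subset J_{M'}\otimes T_H(M'')+T_H(M')\otimes J_{M''}\]
was deduced there from \eqref{eq:righ comodule} alone, and its left analogue from \eqref{eq:left comodule} alone, so $\kappa_r$ and $\kappa_l$ are already surjective. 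The remaining point is injectivity, for which I will sidestep \eqref{eq:righ galois}--\eqref{eq:left galois} by exploiting finite-dimensionality of $H$.

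The key tool is the tensor-degree grading from Proposition~\ref{teo:TV M H}\ref{item:fulcrum graduado}. The commutation rule $hy=(h\_1\cdot_V y)h\_2+\sum_t b'_t(h\_1,y)t h\_2$ in $T(V)\#_{M'}H$ shows that the $b'_t$-corrections contribute in strictly lower tensor degree, so the associated filtration yields $\gr T(V)\#_{M'}H\cong T(V)\#H$ as graded algebras; an identical observation holds for $T(V)\#_{M}H$. The right coaction $\rho_r$ is strict with respect to this filtration, and the associated graded of $\kappa_r$ identifies with the canonical Galois map of $T(V)\#H$ coacting on itself by its own comultiplication, which is trivially bijective since $T(V)\#H$ is a Hopf algebra. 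Combined with the already established surjectivity of $\kappa_r$ and with the finite-dimensionality of each filtered piece ensured by $\dim H<\infty$, a standard filtered Nakayama-type argument upgrades this to injectivity, and hence to bijectivity, of $\kappa_r$; the same reasoning applies to $\kappa_l$. The cocycle deformation conclusion then follows from \cite[Theorem 3.9]{schauenburgL} exactly as in Theorem~\ref{thm:fulcrum bigalois}.

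The main obstacle will be making this last step fully rigorous: one needs to identify the coinvariant subalgebra $B=(T(V)\#_{M'}H)^{\co T(V)\#H}$ and verify that the source $T(V)\#_{M'}H\otimes_B T(V)\#_{M'}H$ of $\kappa_r$ inherits a tensor-degree filtration whose associated graded matches that of $T(V)\#H\otimes_{\ku}T(V)\#H$. In typical situations, for instance the pointed case of \S\ref{subsubsec:the pointed case}, one expects $B=\ku$, so the tensor product is over the ground field and the filtered counting is transparent. The finite-dimensionality of $H$ is precisely what guarantees the appropriate flatness and rank matching at each filtered level, replacing the explicit equations \eqref{eq:righ galois} and \eqref{eq:left galois} used in Theorem~\ref{thm:fulcrum bigalois}.
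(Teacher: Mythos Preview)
Your approach is correct but genuinely different from the paper's. The paper argues injectivity of $\kappa_r$ by hand: given a nonzero $p=\sum_{A,B} A\,h_{A,B}\ot B\,k_{A,B}$, it selects words $A_0,B_0$ of maximal lengths $s,t$, applies the projection $\pi_{s+t}\ot\pi_0$ to $\kappa_r(p)$, and rewrites the result using the automorphism $\phi$ of Proposition~\ref{teo:TV M H}\ref{item:phi} together with the auxiliary bijection $\kappa':q\ot h\mapsto qh\_{1}\ot h\_{2}$ to force $h_{A_0,B_0}\ot k_{A_0,B_0}=0$, a contradiction; bijectivity then follows because source and target have the same finite dimension on each filtered piece. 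Your route through the associated graded is more conceptual and actually sharper than you present it: once $\gr\kappa_r$ is identified with the Hopf Galois map of $T(V)\#H$ (which does require checking that $\rho_r$ is filtered and that $\gr\rho_r=\Delta$ under the identification $\gr\bigl(T(V)\#_{M'}H\bigr)\cong T(V)\#H$), bijectivity of $\kappa_r$ follows from the filtration being bounded below and exhaustive --- neither your separate surjectivity step nor the hypothesis $\dim H<\infty$ is needed at that point. Your third paragraph is a phantom obstacle: in the bigalois framework the canonical map is taken over $\ku$ (this is how both Theorem~\ref{thm:fulcrum bigalois} and the paper's proof here treat it), so there is no $\ot_B$ issue to resolve and the filtered counting goes through on $\ot_\ku$ directly.
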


\begin{proof}
	As for Theorem \ref{thm:fulcrum bigalois}, we must prove that $\kappa_r$ and $\kappa_l$ are bijective. However, it is enough to verify their injectivity as $H$ is finite-dimensional. Indeed, the image of the subspace
	\begin{align*}
		\bigoplus_{s+t\leq n}(T(V)\#_{M'}H)_s\ot (T(V)\#_{M'}H)_{t}
	\end{align*}
	under $\kappa_r$
	is contained in  $\bigoplus_{s+t\leq n}(T(V)\#_{M'}H)_s\ot (T(V)\#H)_{t}$ for all $n\in\N$, and these are finite-dimensional vector spaces of the same dimension by Theorem \ref{teo:TV M H}. In order to prove that $\kappa_r$ is injective, we proceed as follows. Let $p=\sum_{A,B}A h_{A,B}\ot B k_{A,B}$ where the sum runs over all words $A,B$ in $\{y_i\}_{i\in I}$, here $h_{A,B},k_{A,B}\in H$. If $p\neq0$, we pick words $A_0$ and $B_0$ with maximal lengths among the words with $h_{A,B}\neq0\neq k_{A,B}$. Let $s$ and $t$ be the lengths of $A_0$ and $B_0$, respectively. Then
	\begin{align*}
		(\pi_{s+t}\ot\pi_0)\kappa_r(p)&=\sum_{\substack{\operatorname{length}A=s\\\operatorname{length}B=t}}\pi_{s+t}(A h_{A,B} B k_{A,B}{}\_{1})\ot k_{A,B}{}\_{2}\\
		&=\sum_{\substack{\operatorname{length}A=s\\\operatorname{length}B=t}}A\,\pi_{t}(h_{A,B} B)\,k_{A,B}{}\_{1}\ot k_{A,B}{}\_{2}
	\end{align*}
	where, by an abuse of notation, we use $\pi_n$ to denote the projections from $T(V)\#_{M'}H$ and $T(V)\#H$ onto their homogeneous components for any $n\in\N$. Now, if $\ker\kappa_r(p)=0$, the above equality implies that
	\begin{align*}
		0=&\sum_{\operatorname{length}B=t}\,\pi_{t}(h_{A_0,B} B )\,k_{A_0,B}{}\_{1}\ot k_{A_0,B}{}\_{2}\\
		=&(\pi_{t}\phi\ot\id_H)\kappa'\left(\sum_{\operatorname{length}B=t}Bh_{A_0,B}\ot k_{A_0,B}\right)
	\end{align*}
	where $\kappa'$ denotes the automorphism $q\ot h\mapsto qh\_{1}\ot h\_{2}$ of $(T(V)\#_{M'}H)_{t}\ot H$ and $\phi$ is the automorphism of $T(V)\#_{M'}H$ given by Theorem \ref{teo:TV M H} (b). We notice that $\phi(T(V)\#_{M'}H)_{t}\subseteq\oplus_{\ell\leq t}(T(V)\#_{M'}H)_\ell$ by definition of $\phi$, see also \eqref{eq:h conmutando con una palabra}, and the restriction of $\phi$ induces an automorphism of $\oplus_{\ell\leq t}(T(V)\#_{M'}H)_\ell$. Since $H$ is finite-dimensional, we deduce that $\pi_{t}\phi$ must be injective in $(T(V)\#_{M'}H)_{t}$, and therefore $0=\sum_{\operatorname{length}B=t}Bh_{A_0,B}\ot k_{A_0,B}$. In particular, $h_{A_0,B_0}\ot k_{A_0,B_0}$ must be $0$, which contradicts the fact that $h_{A_0,B_0}\neq0\neq k_{A_0,B_0}$. In conclusion $\kappa_r(p)\neq0$ if $p\neq0$, and hence $\kappa_r$ is injective. The proof for $\kappa_l$ is analogous.
\end{proof}

\section{Liftings}\label{sec:lifting non semisimple}

Let $H$ be a cosemisimple Hopf algebra with bijective antipode over a field $\ku$, and $\B(V)$ the Nichols algebra of a Yetter-Drinfeld module $V\in\ydh$. The following characterization of the liftings extends \cite[Proposition 2.4]{AV} to the non-semisimple case.

\begin{theorem}\label{teo:liftings}
	A Hopf algebra $L$ is a lifting of $\B(V)$ over $H$ if and only if there exists an extension $M$ of $V$ by $H$ and an epimorphism of Hopf algebras $\phi:T(V)\#_MH\longrightarrow L$ such that $\phi_{|H\oplus V\#H}:H\oplus V\#H\longrightarrow L_1$ is a bijection.
\end{theorem}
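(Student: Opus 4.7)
The statement is an equivalence; the forward direction (constructing $M$ and $\phi$ from a lifting $L$) is the substantial one. So assume first that $L$ is a lifting of $\B(V)$ over $H$. Under the adjoint action of $H$, which preserves the coradical filtration since $H$ is a Hopf subalgebra, $L_1$ is an $H$-module containing $H$ as a submodule, and the identification $L_1/H\simeq V\#H$ coming from $\gr L=\B(V)\#H$ is $H$-linear. The subspace $V\subseteq V\#H$ (via $v\mapsto v\ot 1$) is an $H$-submodule, acted on precisely by the Yetter--Drinfeld action, so its preimage $M\subseteq L_1$ is an $H$-module extension $0\to H\to M\to V\to 0$. To build a Hopf algebra map I also need a section $\sigma:V\to L_1$ satisfying $\Delta_L(\sigma(v))=\sigma(v)\ot 1+v\_{-1}\ot\sigma(v\_0)$. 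Here cosemisimplicity of $H$ plays a decisive role: the category of $H$-bicomodules is semisimple, so the short exact sequence of $H$-bicomodules $0\to H\to L_1\to V\#H\to 0$ splits, and restricting a bicomodule section to $V\subseteq V\#H$ yields the desired $\sigma$.

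Define $\phi$ on generators by $\phi|_H=\id_H$ and $\phi(v)=\sigma(v)$. The relations $hx=(h\_1\cdot_M x)h\_2$ hold automatically in $L$, since the adjoint action there is inner. To confirm that $T(V)\#_MH$ is itself a Hopf algebra, i.e.\ that \eqref{eq:hopf TVMH} holds for this $M$, I observe that both sides of \eqref{eq:hopf TVMH} lie in $H\ot H$ and agree in $L\ot L$ as the $H\ot H$-components of the tautological identity $\Delta_L(h\sigma(v))=\Delta_L((h\_1\cdot_M\sigma(v))h\_2)$; since $\phi|_H=\id_H$, the map $\phi\ot\phi$ is injective on $H\ot H$, and the equality descends to $H\ot H\subseteq T_H(M)\ot T_H(M)$. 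Thus $\phi$ descends to a Hopf algebra homomorphism $\phi:T(V)\#_MH\to L$, which is surjective because $\gr L$ is generated in degree one, so $L$ is generated as an algebra by $L_1=H+\sigma(V)H\subseteq\img\phi$. The restriction $\phi|_{H\oplus V\#H}:H\oplus V\#H\to L_1$ is bijective: its image is $H+\sigma(V)H=L_1$, and a vanishing sum $h+\sum_i\sigma(v_i)k_i=0$ in $L_1$ reduces modulo $H$ to $\sum_iv_i\ot k_i=0$ in $V\#H$, forcing each term and then $h$ itself to vanish.

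For the reverse direction, assume given $M$ and $\phi$. By Proposition \ref{teo:TV M H}\ref{item:fulcrum graduado}, the length filtration $C_n:=\oplus_{i\leq n}(T(V)\#_MH)_i$ is a coalgebra filtration of $T(V)\#_MH$ with $C_0=H$ and $C_1=H\oplus V\#H$, so $\phi(C_n)$ is a coalgebra filtration of $L$ with $\phi(C_0)=\phi(H)$ and $\phi(C_1)=L_1$. By \cite[Proposition 11.1.1]{sweelder}, $L_0\subseteq\phi(H)$; combined with $\phi(H)\subseteq L_0$ from cosemisimplicity, $L_0=\phi(H)\simeq H$. The associated graded $\gr L$ (for the coradical filtration) is coradically graded and of the form $R\#H$ for some graded connected Hopf algebra $R$ in $\ydh$, with $R_1\simeq L_1/L_0\simeq V$ generating it. Coradical gradedness of $R\#H$ forces the primitives of $R$ to lie in degree one, which characterizes $R=\B(V)$; hence $\gr L\simeq\B(V)\#H$ and $L$ is a lifting.

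The main obstacle is the choice of compatible section $\sigma$ in the forward direction: this is where cosemisimplicity is indispensable, replacing the Hopf bimodule splitting used in the semisimple setting of \cite[Proposition 2.4]{AV}. In contrast to that situation, we cannot demand that $M$ itself split as an $H$-module extension, and this failure is precisely the reason for introducing the fulcrum $T(V)\#_MH$.
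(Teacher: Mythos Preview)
Your overall outline matches the paper's, but the forward direction has a real gap: you assert that $0\to H\to L_1\to V\#H\to 0$ is a short exact sequence of $H$-bicomodules, yet you never explain what $H$-bicomodule structure $L_1$ carries. From $\Delta(L_1)\subset H\otimes L_1+L_1\otimes H$ alone one does not get coactions $L_1\to H\otimes L_1$ and $L_1\to L_1\otimes H$; these require a coalgebra retraction $\Pi:L\to H$ with $\Pi_{|H}=\id_H$, so that $\delta_l=(\Pi\otimes\id)\Delta$ and $\delta_r=(\id\otimes\Pi)\Delta$ are honest coactions. The paper obtains $\Pi$ from \cite[Corollary~2.18]{ardizzonimstefan}, which is precisely where cosemisimplicity enters; the semisimplicity of the bicomodule category is then used, via \cite[Lemma~1.1]{AN}, to produce the complement $P_1=\ker\Pi\cap L_1$. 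Your sentence ``the category of $H$-bicomodules is semisimple, so the sequence splits'' is correct once the bicomodule structure exists, but establishing that structure is the substantive step you skipped.

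There is a second, smaller gap: even granting $\Pi$, a bicomodule section $\sigma$ gives $\delta_r(\sigma(v))=\sigma(v)\otimes1$ and $\delta_l(\sigma(v))=v\_{-1}\otimes\sigma(v\_0)$, but the identity $\Delta(\sigma(v))=\delta_r(\sigma(v))+\delta_l(\sigma(v))$ holds only for $\sigma(v)\in\ker\Pi$; in general $\Delta(x)=\delta_l(x)+\delta_r(x)-\Delta_H(\Pi(x))$. One must therefore choose the section to land in $P_1=\ker\Pi\cap L_1$, which is what the paper does by invoking \cite[Lemma~1.1]{AN}. Your reverse direction, by contrast, is more detailed than the paper's (which simply cites \cite[Proposition~2.4]{AV}) and is correct.
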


\begin{proof}
	Let us assume that $L$ is a lifting. Thus, \cite[Corollary 2.18]{ardizzonimstefan} guarantees the existence of a right $H$-module coalgebra map
	$\Pi:L\longrightarrow H$
	such that $\Pi_{|H}=\id_{H}$. Let $\delta_l=(\Pi\ot\id)\Delta$ and $\delta_r=(\id\ot\Pi)\Delta$ be the induced $H$-coactions on $L$. By \cite[Theorem 3.71]{ardizzonimstefan}, $L_n\simeq L_n^{co\,\delta_r}\#H$ as right $H$-module coalgebras for all $n\in\N$. On the other hand, by \cite[Lemma 1.1]{AN}, $L_1=H\oplus P_1$ as $H$-bicomodules with
	\begin{align*}
		P_1=L_1\cap\ker\Pi=\{x\in L\mid\Delta(x)=\delta_r(x)+\delta_l(x)\}
	\end{align*}
	and hence the projection  $\pi:L_1\longrightarrow P_1$ with $\ker\pi=H$ is a morphism of right $H$-modules and $H$-bicomodules.
	We deduce then that $V\#H\simeq L_1/L_0\simeq P_1\simeq L_1^{co\,\delta_r}\#H$, and $\pi_{|P_1^{co\,\delta_r}}:P_1^{co\,\delta_r}\longrightarrow V$ is a bijection. Since $V$ is invariant by the adjoint action of $H$ in $V\#H$, it follows that $\ad H(P_1^{co\,\delta_r})\subset P_1^{co\,\delta_r}\oplus\ker\pi=P_1^{co\,\delta_r}\oplus H$. Let us identify $P_1^{co\,\delta_r}$ with $V$ using $\pi$. Therefore $M=V\oplus H$ is an extension of $H$-modules of $V$ by $H$ satisfying the first assumptions in \S\ref{subsec:hopf for} with $\delta=\delta_l$. Thus, there exists a Hopf algebra map $\phi:T(V)\#_MH\longrightarrow L$ such that $\phi_{|V\oplus H}=\id_{|V\oplus H}$ which is surjective as $L$ is generated by $L_1=H\oplus V\#H$.
	
	The reciprocal statement follows as in \cite[Proposition 2.4]{AV}.
\end{proof}

\subsection{Pointed liftings}\label{subsec:lifting pointed}
Assume that $H=\ku G$ is a group algebra. As in \S\ref{subsubsec:the pointed case}, we may choose a basis $\{x_i\}_{i\in I}$ of $V$ such that $\delta(x_i)=g_i\ot x_i$ for certain $g_i\in G$, $i\in I$. In this case, the action on the module $M$ involved in Theorem \ref{teo:liftings} is given by \eqref{eq:M pointed case}, in view of Corollary \ref{cor:TVMH pointed case}. Hence, we obtain the following.

\begin{cor}\label{cor:pointed liftings}
	A pointed Hopf algebra $L$ is a lifting of $\B(V)$ over $\ku G$, if and only if $L$ is generated by $\{g\}_{g\in G}$ and $\{x_i\}_{i\in I}$ satisfying that
	\begin{enumerate}
		\item the comultiplication is given by
		\begin{align*}
			\Delta(g)=g\ot g\quad\mbox{and}\quad\Delta(x_i)=x_i\ot1+g_i\ot x_i,
		\end{align*}
		\item there exists a bilinear map $b:\ku G\times V\longrightarrow\ku$ such that
		\begin{align*}
			g x_ig^{-1}=g\cdot_V x_i+b(g,x_i)(1-gg_ig^{-1})
		\end{align*}
		for all $g\in G$ and $i\in I$, and
		\item $L_1=\ku\{x_i\}_{i\in I}\,\ku G\oplus\ku G$.
	\end{enumerate}
	\qed
\end{cor}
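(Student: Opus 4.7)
The plan is to derive this corollary as a specialization of Theorem~\ref{teo:liftings} to the case $H=\ku G$, using Corollary~\ref{cor:TVMH pointed case} to reinterpret the data of the extension $M$ in terms of the bilinear map $b$. Conditions (a)--(c) are essentially a presentation by generators and relations of $T(V)\#_M\ku G$, together with the bijection clause of Theorem~\ref{teo:liftings}.

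For the forward direction, I would suppose $L$ is a pointed lifting of $\B(V)$ over $\ku G$. Theorem~\ref{teo:liftings} then supplies an extension $M$ and a Hopf epimorphism $\phi\colon T(V)\#_M\ku G\to L$ whose restriction to $\ku G\oplus V\#\ku G$ is a bijection onto $L_1$. By Corollary~\ref{cor:TVMH pointed case}, the $\ku G$-module structure on $M$ has the form \eqref{eq:M pointed case} for some bilinear map $b$. Condition (a) follows from the Hopf algebra structure on $T(V)\#_M\ku G$ set up in \S\ref{subsec:hopf for}, transported through $\phi$. For (b), the commutation rule $gx_i=(g\_{1}\cdot x_i)g\_{2}$ in $T(V)\#_M\ku G$ collapses, for $g\in G$ and using \eqref{eq:M pointed case}, to $gx_i=(g\cdot_V x_i)g+b(g,x_i)(g-gg_i)$; multiplying on the right by $g^{-1}$ and applying $\phi$ yields exactly the formula in (b). Condition (c) is a direct restatement of the bijection $\phi_{|\ku G\oplus V\#\ku G}\to L_1$.

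For the converse, given $L$ presented as in (a)--(c), the first task is to manufacture the extension $M$. I would set $M=\ku\{x_i\}_{i\in I}\oplus\ku G$ and define the $\ku G$-action via \eqref{eq:M pointed case} using the given $b$; by Corollary~\ref{cor:TVMH pointed case}, to conclude that this is a well-defined action and that the associated $T(V)\#_M\ku G$ is a pointed Hopf algebra, it suffices to verify the cocycle identity \eqref{eq:M extension pointed case}. This is the only nontrivial step: I would compute $(gh)x_i$ and $g(hx_i)$ inside $L$ using relation (b) twice and equate the two expressions. The comultiplication formula in (a) forces each $x_i$ to be a nontrivial $(1,g_i)$-skew-primitive, hence $x_i\notin\ku G$, and together with the direct-sum decomposition in (c) this yields the linear independence of $\{x_i\}_{i\in I}$ modulo $\ku G$ in $L_1$; extracting the coefficient of each $x_i$ then produces \eqref{eq:M extension pointed case}. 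With the cocycle in hand, the universal property of $T(V)\#_M\ku G$ as the algebra presented by (a), (b) and the relations of $\ku G$ furnishes a Hopf algebra surjection $\phi\colon T(V)\#_M\ku G\to L$; Proposition~\ref{teo:TV M H} combined with (c) forces $\phi$ to restrict to a bijection on $\ku G\oplus V\#\ku G$, so Theorem~\ref{teo:liftings} concludes that $L$ is a lifting. The main obstacle is the cocycle verification in the backward direction---every other step is a direct reading of previously established results.
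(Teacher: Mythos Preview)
Your proposal is correct and follows essentially the same approach as the paper: the corollary is presented there as an immediate specialization of Theorem~\ref{teo:liftings} to $H=\ku G$, with the form of the extension $M$ pinned down by Corollary~\ref{cor:TVMH pointed case} (equivalently, by the analysis in \S\ref{subsubsec:the pointed case} showing that \eqref{eq:hopf TVMH} reduces to \eqref{eq:M pointed case}). Your write-up simply unpacks this in more detail---in particular, your explicit verification of the cocycle identity \eqref{eq:M extension pointed case} in the backward direction is a point the paper leaves entirely implicit, but it is the right thing to check and your argument for it is sound.
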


\section{Examples}\label{sec:example}

\subsection{An example in positive characteristic}

Let $\X$ be the rack of transpositions in $\Sn_3$, or equivalently, the affine rack $(\Z_3,2)$. This is the cyclic group $\Z_3$ endowed with the operation $i\rhd j=2i-j$, for $i,j\in\Z_3$. The enveloping group $G_{\X}$ of $\X$ \cite{EG,J} is defined as
\begin{align}\label{eq:enveloping group}
	G_{\X}=\langle g_0,\,g_1,\,g_2\mid g_ig_j=g_{i\rhd j}g_i,\, i,j\in\Z_3\rangle.
\end{align}
From now on, we fix a non-abelian quotient group $G$ of $G_{\X}$. We still denote $g_i$ the image in $G$ of the generators of $G_{\X}$. Then $g_0,$ $g_1$ and $g_2$ are pairwise different elements of $G$, they generate $G$ and form a conjugacy class which naturally identifies with $\X$ as a rack. Using this identification, we let $G$ acts on $\Z_3$ and write $g\cdot i$ for the action of $g\in G$ on $i\in\Z_3$; for instance, $g_j\cdot i=j\rhd i$ for  $j\in\Z_3$. The centralizer $G^{g_0}$ of $g_0$ is the cyclic group generated by $g_0$, see for instance \cite[Section 5.1]{GHV}.

Throughout this section $\ku$ is a field of characteristic $2$. Let $(\varepsilon,\ku)$ be the one-dimensional representation of $G^{g_0}$ given by $\varepsilon:G^{g_0}\longrightarrow\ku^*$, $\varepsilon(g_0)=1$; in characteristic $2$, this coincides with the sign representation. The structure of the Yetter-Drinfeld module
\begin{align*}
	V=M(g_0,\varepsilon)=\Ind_{G^{g_0}}^{G}\ku=\ku G\ot_{\ku G^{g_0}}\ku\in\ydg
\end{align*}
is determined by
\begin{align}\label{eq:tetrahedron YD module}
	g\cdot x_i=x_{g\cdot i}\quad\mbox{and}\quad\delta(x_i)=g_i\ot x_i
\end{align}
for all $g\in G$ and $i\in\Z_3$, where
\begin{align*}
	x_0=e\ot1,\quad
	x_1=g_2\cdot x_0=g_2\ot1\quad\mbox{and}\quad
	x_2=g_1\cdot x_0=g_1\ot1.
\end{align*}

The Nichols algebra $\B(V)$ of $V$ identifies with the Fomin--Kirillov algebra $\cF\cK_3$ \cite{fominkirillov}. This is the quotient of the tensor algebra $T(V)$ by the ideal generated by the elements
\begin{align*}
	R_{i,j}=x_ix_j+x_{i\rhd j}x_i+x_{(i\rhd j)\rhd i}x_{i\rhd j}
\end{align*}
for $i,j\in\Z_3$. We notice that $R_{i,i}=x_i^2$, $R_{i,j}=R_{i\rhd j,i}=R_{j,i\rhd j}$ and  $(i\rhd j)\rhd i=j$ for all $i,j\in\Z_3$. A basis of $\B(V)$ consists of all possible words $m_1m_2m_3$ where $m_i$ is an element in the $i$-th row of the list:
\begin{align*}
	&1,\,x_0,\\
	&1,\,x_1,\,x_1x_0,\\
	&1,\,x_2.
\end{align*}

In order to define the liftings of $\B(V)$ over $\ku G$, we need to introduce some notation. Recall
\S\ref{subsubsec:the pointed case}. Let $\cL$ be a collection of bilinear maps $\lambda:\ku G\times V\longrightarrow\ku$ classifying (up to isomorphism) all the extensions of $V$ by $\ku G$. For $\lambda\in\cL$, we denote $M_\lambda=V\oplus\ku G$ the extension determined by 
\begin{align*}
	g\cdot x_i=x_{g\cdot i}+\lambda(g,x_i)(1+g_{g\cdot i}),
\end{align*}
$g\in G$ and $i\in\Z_3$, cf. \eqref{eq:M pointed case}.  We set $T_\lambda=T(V)\#_{M_\lambda}\ku G$ for the associated Hopf algebra given by Definition \ref{def:TV M H}. For brevity, we write $\lambda_{i,j}=\lambda(g_i,x_j)$ and then
\begin{align}\label{eq:lij}
\lambda_{i,j\rhd k}+\lambda_{j,k}=\lambda_{i\rhd j,i\rhd k}+\lambda_{i,k}
\end{align}
for all $i,j,k\in\Z_3$ thanks to \eqref{eq:M extension pointed case} and the relation $g_ig_j=g_{i\rhd j}g_i$. Let $\cM_\lambda$ be the set of matrices $\mu\in\ku^{\Z_3\times\Z_3}$ satisfying the equations
\begin{align}\label{eq:mij}
	&\mu_{i,j}=\mu_{i\rhd j,i}=\mu_{j,i\rhd j}\quad\mbox{and}\\
	\label{eq:rel linda}
	\begin{split}
		&\mu_{i,j}+\mu_{k\rhd i,k\rhd j}=\\
		&=\lambda_{k,i}(\lambda_{k,i\rhd j}+\lambda_{i,j})+\lambda_{k,j}(\lambda_{k,i}+\lambda_{j,(i\rhd j)})+\lambda_{k,(i\rhd j)}(\lambda_{k,j}+\lambda_{(i\rhd j),i})
	\end{split}
\end{align}
for all $i,j\in\Z_3$. Finally, we introduce the elements
\begin{align*}
	r_{i,j}= \lambda_{i,j}x_i+\lambda_{i\rhd j,i} x_{i\rhd j}+\lambda_{j,i\rhd j}x_{j}\in T_\lambda,
\end{align*}
for $i,j\in\Z_3$; notice that $r_{i,j}=r_{i\rhd j,i}=r_{j,i\rhd j}$ and $r_{i,i}=\lambda_{i,i}x_i$.

\begin{definition}
	Given $\lambda\in\cL$ and $\mu\in\cM_\lambda$, we denote $L_{\lambda\mu}$ the quotient of $T_\lambda$ by the ideal generated by the elements
	\begin{align*}
		R_{i,j}+r_{i,j}+\mu_{i,j}(1+g_ig_j)
	\end{align*}
	for $i,j\in\Z_3$.
\end{definition}

As in \eqref{eq:M' pointed case}, we let $M'_\lambda$, $\lambda\in\cL$, be the extension of $V$ by $\ku G$ determined by
\begin{align*}
	g\cdot y_i=y_{g\cdot i}+\lambda(g,y_i),
\end{align*}
$g\in G$ and $i\in\Z_3$. Then the algebra $T'_\lambda=T(V)\#_{M'_\lambda}\ku G$ is a $(T_\lambda,T(V)\#\ku G)$-bigalois extension by Corollary \ref{cor:TVMH pointed case}. Let $R'_{i,j}$ and $r'_{i,j}$ be the elements in $T_\lambda'$ obtained from $R_{i,j}$ and $r_{i,j}$ by replacing the letters $x_i$ with $y_i$.

\begin{definition}
	Given $\lambda\in\cL$ and $\mu\in\cM_\lambda$, we denote $A_{\lambda\mu}$ the quotient of $T'_\lambda$ by the ideal generated by the elements
	\begin{align*}
		R_{i,j}'+r'_{i,j}+\mu_{i,j}
	\end{align*}
	for $i,j\in\Z_3$.
\end{definition}

We may define the above algebras for any $\mu\in\ku^{\Z_3\times\Z_3}$ but this is pointless in view of the following results.

\begin{lemma}\label{le:A neq 0}
\begin{enumerate}
	\item\label{item:A neq 0} If $\lambda\in\cL$ and $\mu\in\cM_\lambda$, then $A_{\lambda\mu}\neq0$.
	\item\label{item:L} If $\mu\in\ku^{\Z_3\times\Z_3}$ and $L_{\lambda\mu}\neq0$, then $\mu\in\cM_\lambda$.
\end{enumerate}	
\end{lemma}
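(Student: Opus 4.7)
\emph{Overall plan.} Both parts are tackled by extending the Bergman's Diamond Lemma argument of Proposition \ref{teo:TV M H} to the respective quotient algebras. For $\cC\in\{A_{\lambda\mu},L_{\lambda\mu}\}$, I would augment the reduction system of that proof with a rule replacing the leading monomial of each defining relation by its tail, keeping the degree-lex order. The $S$-irreducible words then have the shape (standard $\cF\cK_3$ monomial) $\cdot\,g$ with $g\in G$, and the lemma reduces to determining when the ambiguities of this extended system resolve.

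\emph{Part (a).} For $A_{\lambda\mu}$, the ambiguities involving a letter $g\in G$ close up as in Proposition \ref{teo:TV M H}, thanks to the cocycle identity \eqref{eq:lij} and the $G$-equivariance of the rack. The new ambiguities are cubic and split in two families. First, the identities $R'_{i,j}=R'_{i\rhd j,i}=R'_{j,i\rhd j}$ and $r'_{i,j}=r'_{i\rhd j,i}=r'_{j,i\rhd j}$ transfer to the deformed relations and force, after matching, the symmetry conditions \eqref{eq:mij} on $\mu$. Second, a proper cubic ambiguity inherited from $\cF\cK_3$ produces, for each triple $(k,i,j)$, a scalar obstruction which an explicit computation (tracking all $r'$- and $\mu$-contributions, using $\car\ku=2$ and \eqref{eq:lij}) identifies with the difference of the two sides of \eqref{eq:rel linda}. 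Since $\mu\in\cM_\lambda$, every obstruction vanishes, the Diamond Lemma yields a basis of cardinality $|\cF\cK_3|\cdot|G|$, and in particular $A_{\lambda\mu}\neq0$.

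\emph{Part (b) and main obstacle.} The same ambiguity analysis in $L_{\lambda\mu}$ produces, for each cubic ambiguity, an element of the defining ideal $I$ of the form $\gamma\cdot(1+g_ag_b)\in\ku G$, with $\gamma$ precisely the scalar obstruction to \eqref{eq:mij} or \eqref{eq:rel linda}. Since $L_{\lambda\mu}\neq0$ gives $1\notin I$, my plan is to show that each such $\gamma$ must vanish: $\gamma\cdot(1+g_ag_b)$ lies in $I\cap\ku G$, and a direct check shows that this intersection is linearly spanned by the degree-zero projections $\mu_{i,j}(1+g_ig_j)$ of the generating relations together with their $G$-conjugates, so a non-vanishing $\gamma$ would eventually force $1\in I$. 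The main obstacle is this last step: in characteristic $2$ the counit gives $\varepsilon(1+g_ag_b)=0$, so one cannot simply apply $\varepsilon$, and the argument instead requires exploiting the explicit rack-theoretic structure of $G$ as a quotient of $G_{\X}$, in particular that the orbit of any $1+g_ag_b$ under the adjoint $G$-action sweeps out enough of $\ku G$ to force $1$ into $I$ whenever some $\gamma$ is nonzero.
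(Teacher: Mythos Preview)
Your overall Diamond Lemma plan is the right one, but two concrete mistakes derail the execution.

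\emph{Misattribution of the constraints.} The relation \eqref{eq:rel linda} does \emph{not} come from a cubic-in-$y$ ambiguity; it is exactly the scalar obstruction to resolving the mixed ambiguity $g_k\,y_iy_j$. A direct computation of $g_k(R'_{i,j}+r'_{i,j}+\mu_{i,j})g_k^{-1}$ in $T'_\lambda$, using $g_ky_lg_k^{-1}=y_{k\rhd l}+\lambda_{k,l}$, shows that the degree-$1$ discrepancy is absorbed by \eqref{eq:lij} while the degree-$0$ discrepancy is precisely the two sides of \eqref{eq:rel linda}. So your claim that the ambiguities involving a group letter close up using only \eqref{eq:lij} is false: they already require $\mu\in\cM_\lambda$. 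This also explains the paper's short proof of (b): one simply reads off \eqref{eq:rel linda} from the analogous $g_kx_ix_j$ computation in $T_\lambda$, and \eqref{eq:mij} from the identity $R_{i,j}+r_{i,j}=R_{i\rhd j,i}+r_{i\rhd j,i}$. Your proposed route for (b), through pure cubic ambiguities and an analysis of $I\cap\ku G$, is an unnecessary detour that runs into the obstacle you identified.

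\emph{The reduction system is not complete.} For part (a), even after imposing $\mu\in\cM_\lambda$, the quadratic reduction rules do \emph{not} form a confluent system: two of the cubic overlap ambiguities fail to resolve and instead produce a genuinely new cubic rewriting rule of the shape
\[
y_2y_1y_2 \;\longrightarrow\; y_1y_2y_1+(\text{lower-degree terms in }\lambda,\mu),
\]
which already in the undeformed case $\lambda=\mu=0$ reads $y_2y_1y_2=y_1y_2y_1$ and is \emph{not} a direct reduction of the $\cF\cK_3$ quadratics under the chosen order. One must add this rule and then check the resulting quartic and mixed ambiguities; only then does the Diamond Lemma apply. Your assertion that ``every obstruction vanishes'' and that the irreducible words immediately give the $|\cF\cK_3|\cdot|G|$-dimensional basis skips this completion step.
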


\begin{proof}
	\ref{item:A neq 0} We think of $A_{\lambda\mu}$ as an algebra presented by generators and relations, and apply Bergman's Diamond Lemma. That is, we have to verify that the ambiguities $g_ky_iy_j$, $y_3y_3y_1$, $y_3y_3y_2$, $y_3y_1y_1$ and $y_3y_2y_2$ are resolvable. For that purpose, we carried out long and tedious calculations with the help of \cite{GAP,GBNP}, in which we have to use the relations satisfied by $\lambda$ and $\mu$. It turn out that all of these ambiguities are resolvable except for $y_3y_1y_1$ and $y_3y_2y_2$, which produce the new relation $y_2y_1y_2= y_1y_2y_1 + (\lambda_{2,1} + \lambda_{1,2})y_2y_1 + ( \lambda_{2,1}+ \lambda_{1,2})y_1y_2 + (\lambda_{1,2}\lambda_{2,1} +\mu_{3,3}+\mu_{1,1}+\mu_{1,2})y_2 +(\lambda_{1,2}\lambda_{2,1} +\mu_{3,3}+\mu_{2,2}+\mu_{1,2})y_1 + \lambda_{2,1}(\mu_{2,2}+\mu_{1,2}) + \lambda_{1,2}(\mu_{1,1}+\mu_{1,2})$. This gives rise to new ambiguities to analyze, namely $g_iy_2y_1y_2$, $y_3y_2y_1y_2$, $y_2y_2y_1y_2$ and $y_2y_1y_2y_2$, which also turn out to be resolvable.
	
	\ref{item:L} Reasoning as above, it is enough to note that $g_kx_ix_j$ is resolvable only if \eqref{eq:rel linda} holds. Also, the equalities satisfied by the elements $R_{i,j}$ and $r_{i,j}$ imply \eqref{eq:mij}.
\end{proof}

We are now ready to state the main result of this subsection.

\begin{theorem}\label{teo:liftings of FK}
	Let $\lambda\in\cL$ and $\mu\in\cM_\lambda$. Then
	\begin{enumerate}
		\item\label{item:Llmn}
		$L_{\lambda\mu}$ is a Hopf algebra quotient of $T_\lambda$ which is a lifting of $\B(V)\#\ku G$, and every such lifting is isomorphic to one of this form.
		\item\label{item:A} $A_{\lambda\mu}$ is a $(L_{\lambda\mu},\B(V)\#\ku G)$-bigalois extension.
		\item\label{item:cocycle} $L_{\lambda\mu}$ is a cocycle deformation of $\B(V)\#\ku G$.
		\item\label{item:iso} $L_{\lambda\mu}\simeq L_{\tilde\lambda\tilde\mu}$ if and only if there exist a rack automorphism $\varphi$ of $\X$ and $\phi,\phi^0,\phi^1,\phi^2\in\ku$ such that $\phi\neq0$,
		\begin{align*}
			\tilde\lambda_{\varphi(i),\varphi(j)}&=\lambda_{i,j}+\phi^{i\rhd j}+\phi^j,\\
			\tilde\mu_{\varphi(i),\varphi(j)}&=\mu_{i,j}+\phi^i\phi^j+\phi^{i\rhd j}\phi^i+\phi^j\phi^{i\rhd j}\quad\mbox{and}\\
			(\phi+1)\tilde\lambda_{\varphi(i),\varphi(j)}&=\phi^i\tilde\lambda_{\varphi(i),\varphi(j)}+\phi^{i\rhd j}\tilde\lambda_{\varphi(i\rhd j),\varphi(i)}+\phi^{j}\tilde\lambda_{\varphi(j),\varphi(i\rhd j)}=0
		\end{align*}
		for all $i,j\in\Z_3$. In such a case, the assignment $g_i\mapsto\tilde g_{\varphi(i)}$ and $x_i\mapsto \phi\tilde x_{\varphi(i)}+\phi^i(1+\tilde g_{\varphi(i)})$, $i\in\Z_3$, induces a Hopf algebra isomorphism between $L_{\lambda\mu}$ and $L_{\tilde\lambda\tilde\mu}$; here we write the generators of $L_{\tilde\lambda\tilde\mu}$ with a tilde.
	\end{enumerate}
\end{theorem}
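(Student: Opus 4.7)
The plan is to follow the cocycle deformation strategy of \cite{AAnMGV} depicted in Figure \ref{fig:lifting via cocycle nonsemisimple}, starting from the bigalois triple $(T_\lambda, T'_\lambda, T(V)\#\ku G)$ provided by Corollary \ref{cor:TVMH pointed case}, and descending through the quotients $L_{\lambda\mu}$, $A_{\lambda\mu}$ and $\B(V)\#\ku G$. Throughout, let $J_L$, $J_A$, $J_B$ denote the defining ideals of these three quotients in the corresponding top objects.

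First, I would verify that $J_L$ is a Hopf ideal of $T_\lambda$. This amounts to a direct computation of $\Delta(R_{i,j}+r_{i,j}+\mu_{i,j}(1+g_ig_j))$ using $\Delta(x_i)=x_i\ot 1+g_i\ot x_i$ and the twisted commutation in $T_\lambda$. The quadratic piece $\Delta(R_{i,j})$ produces cross terms whose non-primitive contributions are governed by the extension parameter $\lambda$; the linear term $r_{i,j}$ is designed precisely so that $\Delta(r_{i,j})$ cancels these pieces modulo $J_L\ot T_\lambda + T_\lambda\ot J_L$, and the residual group-like terms are absorbed by $\Delta(\mu_{i,j}(1+g_ig_j))$, exactly when $\mu\in\cM_\lambda$, i.e. when \eqref{eq:mij} and \eqref{eq:rel linda} hold.

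Next, I would establish (b) by descending the coactions $\rho_r,\rho_l$ of Lemma \ref{le:coactions} to $A_{\lambda\mu}$. Using $\rho_r(y_i)=y_i\ot 1+g_i\ot x_i$ and $\rho_l(y_i)=x_i\ot 1+g_i\ot y_i$, a calculation mirroring the Hopf ideal verification above should yield
\begin{align*}
	\rho_r(R'_{i,j}+r'_{i,j}+\mu_{i,j}) &\in J_A\ot T(V)\#\ku G + T'_\lambda\ot J_B, \\
	\rho_l(R'_{i,j}+r'_{i,j}+\mu_{i,j}) &\in T_\lambda\ot J_A + J_L\ot T'_\lambda.
\end{align*}
This yields the descended bigalois structure and proves (b); part (c) then follows from \cite[Theorem 3.9]{schauenburgL}. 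Faithful flatness forces $L_{\lambda\mu}$ and $\B(V)\#\ku G$ to have equal graded dimensions, and since $A_{\lambda\mu}\neq 0$ by Lemma \ref{le:A neq 0}\ref{item:A neq 0}, we conclude $\gr L_{\lambda\mu}\simeq \B(V)\#\ku G$. Combined with $(L_{\lambda\mu})_0=\ku G$ (inherited from $T_\lambda$ via Theorem \ref{teo:TV M H}), this shows $L_{\lambda\mu}$ is a lifting. For the converse in (a), given any pointed lifting $L$, Corollary \ref{cor:pointed liftings} provides an epimorphism $T_\lambda\twoheadrightarrow L$ with $\lambda\in\cL$. Since $\gr L\simeq \B(V)\#\ku G$, each $R_{i,j}$ lands in $L_1=\ku\{x_i\}_{i\in I}\ku G\oplus \ku G$ inside $L$. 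Comparing coalgebra structures in the $(1,g_ig_j)$-skew primitive component forces this image to have the form $-r_{i,j}-\mu_{i,j}(1+g_ig_j)$ for some scalars $\mu_{i,j}\in\ku$, and Lemma \ref{le:A neq 0}\ref{item:L} guarantees $\mu\in\cM_\lambda$.

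For part (d), any Hopf isomorphism $\Phi:L_{\lambda\mu}\to L_{\tilde\lambda\tilde\mu}$ restricts to an automorphism of the coradical $\ku G$, which must permute the conjugacy class $\X$ via a rack automorphism $\varphi$. The subspace of $(1,g_i)$-skew primitives of $L_{\lambda\mu}$ modulo group-likes is one-dimensional, so $\Phi(x_i)=\phi\tilde x_{\varphi(i)}+\phi^i(1+\tilde g_{\varphi(i)})$ for some $\phi\in\ku^\times$ and $\phi^i\in\ku$. Substituting into the commutation rule of Corollary \ref{cor:pointed liftings}(b) produces the stated formula for $\tilde\lambda$, and substituting into $R_{i,j}+r_{i,j}+\mu_{i,j}(1+g_ig_j)=0$ while comparing coefficients in the PBW-type basis of $L_{\tilde\lambda\tilde\mu}$ given by Theorem \ref{teo:TV M H}\ref{item:basis of the fulcrum} produces the formula for $\tilde\mu$ and the compatibility equation on $\phi,\phi^i$. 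The converse direction is a straightforward verification. The main obstacle lies in the careful tracking of extension scalars through the quadratic products in the Hopf ideal and bigalois descent computations: characteristic $2$ couples different relations together and removes signs that would otherwise simplify the cancellations, as is visible in the form of \eqref{eq:rel linda}.
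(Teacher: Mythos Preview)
Your overall plan aligns with the paper's, but there is a genuine gap in your argument for (b). Verifying that $\rho_r(J_A)\subset J_A\ot T(V)\#\ku G+T'_\lambda\ot J_B$ and the analogous inclusion for $\rho_l$ only shows that $A_{\lambda\mu}$ inherits an $(L_{\lambda\mu},\B(V)\#\ku G)$-\emph{bicomodule algebra} structure; it does not by itself make the induced Galois maps $\kappa_r,\kappa_l$ bijective. The paper does not attempt a direct descent of the Galois maps here. Instead it runs the recipe of \cite[\S5]{AAnMGV}: compute the section $\gamma_1$ of the top-row right Galois extension and check $\gamma_1(R_{i,j})=R'_{i,j}$; observe that the subalgebra $Y\subset T(V)\#\ku G$ generated by the $S(R_{i,j})$ is free; define $\varphi\in\Alg^{\B(V)\#\ku G}(Y,T'_\lambda)$ by $S(R_{i,j})\mapsto g_j^{-1}g_i^{-1}(R'_{i,j}+r'_{i,j}+\mu_{i,j})$; and then invoke \cite[Theorem 8]{gunther}, which needs precisely $A_{\lambda\mu}=T'_\lambda/\langle\varphi(Y^+)\rangle\neq0$ from Lemma \ref{le:A neq 0}\ref{item:A neq 0} to produce the right Galois property. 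The left Galois property then comes from \cite[Corollary 5.12]{AAnMGV} via the formula for $\gamma_1$. You cite $A_{\lambda\mu}\neq0$ only \emph{after} claiming bigalois, to extract the lifting property, but it is in fact the key hypothesis that yields the Galois property in the first place. If you prefer to bypass G\"unther's theorem and argue by direct descent in the style of Theorem \ref{thm:fulcrum bigalois}, you must additionally verify that $\widetilde\kappa_r$ and $\widetilde\kappa_r^{-1}$ (and the left analogues) carry the relevant ideal sums onto each other---a separate computation beyond the coaction check you describe.

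There is also a small misattribution in your treatment of (a). In $T_\lambda$ one has exactly
\[
\Delta(R_{i,j}+r_{i,j})=(R_{i,j}+r_{i,j})\ot1+g_ig_j\ot(R_{i,j}+r_{i,j}),
\]
so the generators of $J_L$ are $(1,g_ig_j)$-skew-primitive for \emph{any} scalars $\mu_{i,j}$, and $J_L$ is a Hopf ideal unconditionally. The hypothesis $\mu\in\cM_\lambda$ is not a coalgebra constraint; by Lemma \ref{le:A neq 0}\ref{item:L} it is what prevents $L_{\lambda\mu}$ from collapsing, since resolving the overlap $g_kx_ix_j$ forces \eqref{eq:rel linda}. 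Your outlines for the converse in (a) and for (d) match the paper's argument.
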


\begin{proof}
	In order to prove \ref{item:Llmn}, we observe that in $T_\lambda$ it holds that
	\begin{align*}
		\Delta(R_{i,j}+r_{i,j})=(R_{i,j}+r_{i,j})\ot1+ g_ig_j\ot(R_{i,j}+r_{i,j}),
	\end{align*}
	for all $i,j\in\Z_3$. Therefore $L_{\lambda\mu}$ is a Hopf algebra quotient of $T_\lambda$. Also, $(L_{\lambda\mu})_1=V\ku G\oplus\ku G$ by \ref{item:cocycle}. Then $L_{\lambda\mu}$ is a lifting by Corollary \ref{cor:pointed liftings}. Reciprocally, if $L$ is such a lifting, it is a quotient of $T_\lambda$ by Corollary \ref{cor:pointed liftings}, and one deduces that it is isomorphic to some $L_{\lambda\mu}$ using the above formula as usual, see for instance \cite[Section 4]{AAnMGV}.
	
	We now address \ref{item:A} using the strategy from \cite[Section 5]{AAnMGV}, see Figure \ref{fig:lifting tetrahedron}. We follow it as a recipe, pointing out the key ingredients in our case. For a more detailed exposition of how the strategy works we refer to \cite[Section 5]{AAnMGV}, \cite{tour} or \cite{GVafin}; in particular, the latter considers, among others,  the lifting of the Fomin--Kirillov algebra $\cF\cK_3$ in characteristic zero.

	Recall $T'_\lambda$ is a right Galois extension of $T(V)\#\ku G$ by Corollary \ref{cor:TVMH pointed case}. Let $\gamma_1:T(V)\#\ku G\longrightarrow T'_\lambda$ be the corresponding section. One can see that $\gamma_1(R_{i,j})=R'_{i,j}$ proceeding as in \cite[Section 4.5]{tour}. Let $Y$ be the subalgebra of $T(V)\#\ku G$ generated by $S(R_{i,j})=g_j^{-1}g_{i}^{-1}R_{i,j}$, $i,j\in\Z_3$. This is isomorphic to the tensor algebra $T(Y)$ by \cite[Lemma 28]{AV} with $U$ the linear span of $R_{i,j}$'s. Hence the assignment $S(R_{i,j})=g_j^{-1}g_{i}^{-1}R_{i,j}\mapsto g_j^{-1}g_{i}^{-1}(R'_{i,j}+r'_{i,j}+\mu_{i,j})$ determines $\varphi\in\Alg^{\B(V)\#\ku G}(Y,T'_\lambda)$ for any $\mu\in\cM$. By Lemma \ref{le:A neq 0}, $T'_\lambda/\langle\varphi(Y^ +)\rangle=A_{\lambda\mu}\neq0$ and hence $A_{\lambda\mu}$ is a right Galois extension of $\B(V)\#\ku G$ by \cite[Theorem 8]{gunther}. Then, we deduce that $A_{\lambda\mu}$ is also a left Galois extension of $L_{\lambda\mu}$ using \cite[Corollary 5.12]{AAnMGV}; here we have to use the formula of $\gamma_1$. This completes the proof of \ref{item:A}, and \ref{item:A} implies \ref{item:cocycle}, cf. \cite{schauenburgL}.
	
	The proof of \ref{item:iso} is standard. We outline the main ideas and leave the details to the readers. If $\Phi:L_{\lambda\mu}\longrightarrow L_{\tilde\lambda\tilde\mu}$ is an isomorphism of Hopf algebras, then $\Phi(g_i)=\tilde{g}_{\varphi(i)}$ and $\Phi(x_i)=\phi \tilde{x}_{\varphi(i)}+\phi^i(1+\tilde{g}_{\varphi(i)})$ for all $i\in\Z_3$. The equality $\Phi(g_ix_j)=\Phi(g_i)\Phi(x_j)$ implies that $\tilde{\lambda}_{\phi(i),\phi(j)}=\lambda_{i,j}+\phi^i+\phi^{i\rhd j}$ for all $i,j\in\Z_3$. The remaining identities in the statement follow from the fact that $\Phi(R_{i,j}+r_{i,j}+\mu_{i,j}(1+g_ig_j))$ must be $0$.	
\end{proof}
	
	\begin{figure}[h]
		\begin{tikzpicture}[scale=1,every node/.style={scale=1}]

			\node[align=center] (centrar) at (-5,0) {};
			
			\node[fill=white] (THM) at (-2.5,1.5) {$T_{\ku G}(M)$};
			
			\node[fill=white,label={above: {\tiny\it bigalois extensions}}] (THMprima) at (0,1.5) {$T_{\ku G}(M')$};
			
			\node[fill=white] (THMsegunda) at (2.5,1.5) {$T_{\ku G}(M'')$};

			\node[fill=white] (TVHl) at (-2.5,0) {$T_\lambda$};
			
			\node[fill=white] (TVH) at (0,0) {$T'_\lambda$};
			
			\node[fill=white] (TVHr) at (2.5,0) {$T(V)\#\ku G$};
			
			\node[fill=white,minimum width=1cm] (L2) at (-2.5,-1.5) {$L_{\lambda\mu}$};
			
			\node[fill=white,minimum width=1cm] (A2) at (0,-1.5) {$A_{\lambda\mu}$};
			
			\node[fill=white] (BVH2) at (2.5,-1.5) {$\B(V)\#\ku G$};
			
			\node[align=center] (cd) at (5,1.5) {\tiny\it cocycle \\ \tiny\it deformations};
			
			\node[align=center] (cd) at (5,0) {\tiny\it cocycle \\ \tiny\it deformations};
			
			\node[align=center] (cd) at (5,-1.5) {\tiny\it cocycle \\ \tiny\it deformations};
			
			\draw[->>] (THM) to (TVHl);
			
			\draw[->>] (THMprima) to (TVH);
			
			\draw[->>] (THMsegunda) to (TVHr);
			
			\draw[->] (THMsegunda) to node[midway, above] {\tiny $\gamma_0=\id$} (THMprima);

			\draw[->>] (TVHl) to (L2);
			
			\draw[->>] (TVH) to (A2);
			
			\draw[->] (TVHr) to node[midway, above] {\tiny $\gamma_1$} (TVH);
			
			\draw[->>] (TVHr) to (BVH2);
			
			\draw[dashed] (BVH2) (A2);

			\begin{scope}[on background layer]
				\draw[dashed] (-3.5,1.5) to (6,1.5);
				
				\draw[dashed] (-3.5,0) to (6,0);
				
				\draw[dashed] (-3.5,-1.5) to (6,-1.5);
				
			\end{scope}
		\end{tikzpicture}
		\caption{The strategy from \cite{AAnMGV} applied to the liftings of the Fomin--Kirillov algebra $\cF\cK_3$ in characteristic 2.}\label{fig:lifting tetrahedron}
	\end{figure}

\subsubsection{The binary case}

Here we let $\ku=\F_2$ be the field of two elements.
We use \emph{Mathematica} \cite{mathematica} to compute all the pairs of matrices $\lambda\in\cL$ and $\mu\in\cM_\lambda$ for $G=G_\X$ over $\F_2$; for the enveloping group, $\lambda\in\cL$ if and only if it satisfies \eqref{eq:lij}. There are $32$ of such pairs. Using Theorem \ref{teo:liftings of FK} \ref{item:iso}, and with the help of GAP \cite{GAP}, we conclude that up to isomorphism there are $10$ liftings of the Fomin--Kirillov algebra $\cF\cK_3$ over $\F_2G_\X$. We list them in Table \ref{table:soluciones}. We notice that the first two classes of liftings are quotients of the bosonization $T(V)\#\F_2G_\X$, whereas we need non-trivial fulcrums to construct the remaining liftings.

Assume now $G=\Sn_3\simeq G_\X/\langle g_0^2\rangle$, cf. \cite[Section 5.1]{GHV}. Then $\lambda\in\cL$ if and only if it satisfies \eqref{eq:lij} and $\lambda_{i,j}=\lambda_{i,i\rhd j}$ for all $i,j\in\Z_3$ as $g_i^2=g_0^2$ in $G_\X$ \cite[Lemma 2.18]{GHV}. We observe that at least one $\lambda$ of each isomorphism class in Table \ref{table:soluciones} satisfies these equalities. Then Table \ref{table:soluciones} also describes the isomorphism classes of the liftings of $\B(V)\#\F_2\Sn_3$. These are new examples of finite digital quantum groups \cite{digital}.

\begin{table}\label{table:soluciones}
	\centering
	\resizebox{.9\textwidth}{!}{
	\renewcommand{\arraystretch}{1.2}
	\begin{minipage}{0.39\textwidth}
		\centering
		\begin{tabular}{| c | c |}
			\hline
			$\lambda$ & $\mu$
			\\
			\hline
			\rule{0pt}{1cm}
			$ \begin{bmatrix} 0& 0& 0 \\ 0& 0& 0 \\ 0& 0& 0 \end{bmatrix} $ &
			$ \begin{bmatrix} 0& 0& 0 \\ 0& 0& 0 \\ 0& 0& 0 \end{bmatrix} $
			\\[1.5em]
			\rule{0pt}{1cm}
			$ \begin{bmatrix} 0& 0& 0 \\ 0& 0& 0 \\ 0& 0& 0 \end{bmatrix} $ &
			$ \begin{bmatrix} 1& 1& 1 \\ 1& 1& 1 \\ 1& 1& 1 \end{bmatrix} $
			\\[1.5em]
			\rule{0pt}{1cm}
			$ \begin{bmatrix} 0& 0& 0 \\ 1& 0& 1 \\ 1& 1& 0 \end{bmatrix} $ &
			$ \begin{bmatrix} 0& 1& 1 \\ 1& 1& 1 \\ 1& 1& 1 \end{bmatrix} $
			\\[1.5em]
			\rule{0pt}{1cm}
			$ \begin{bmatrix} 0& 0& 0 \\ 1& 0& 1 \\ 1& 1& 0 \end{bmatrix} $ &
			$ \begin{bmatrix} 1& 0& 0 \\ 0& 0& 0 \\ 0& 0& 0 \end{bmatrix} $
			\\[1.5em]
			\rule{0pt}{1cm}
			$ \begin{bmatrix} 0& 1& 1 \\ 0& 0& 0 \\ 1& 1& 0 \end{bmatrix} $ &
			$ \begin{bmatrix} 0& 0& 0 \\ 0& 1& 0 \\ 0& 0& 0 \end{bmatrix} $
			\\[1.5em]
			\rule{0pt}{1cm}
			$ \begin{bmatrix} 0& 1& 1 \\ 0& 0& 0 \\ 1& 1& 0 \end{bmatrix} $ &
			$ \begin{bmatrix} 1& 1& 1 \\ 1& 0& 1 \\ 1& 1& 1 \end{bmatrix} $
			\\[1.5em]
			\rule{0pt}{1cm}
			$ \begin{bmatrix} 0& 1& 1 \\ 1& 0& 1 \\ 0& 0& 0 \end{bmatrix} $ &
			$ \begin{bmatrix} 0& 0& 0 \\ 0& 0& 0 \\ 0& 0& 1 \end{bmatrix} $
			\\[1.5em]
			\rule{0pt}{1cm}
			$ \begin{bmatrix} 0& 1& 1 \\ 1& 0& 1 \\ 0& 0& 0 \end{bmatrix} $ &
			$ \begin{bmatrix} 1& 1& 1 \\ 1& 1& 1 \\ 1& 1& 0 \end{bmatrix} $
			\\[1.5em]
			\hline
		\end{tabular}
	\end{minipage}
	\hfill
	\begin{minipage}{0.39\textwidth}
		\centering
		\begin{tabular}{| c | c |}
			\hline
			$\lambda$ & $\mu$
			\\
			\hline
			\rule{0pt}{1cm}
			$ \begin{bmatrix} 0& 0& 0 \\ 0& 0& 0 \\ 0& 0& 0 \end{bmatrix} $ &
			$ \begin{bmatrix} 0& 1& 1 \\ 1& 0& 1 \\ 1& 1& 0 \end{bmatrix} $
			\\[1.5em]
			\rule{0pt}{1cm}
			$ \begin{bmatrix} 0& 0& 0 \\ 0& 0& 0 \\ 0& 0& 0 \end{bmatrix} $ &
			$ \begin{bmatrix} 1& 0& 0 \\ 0& 1& 0 \\ 0& 0& 1 \end{bmatrix} $
			\\[1.5em]
			\rule{0pt}{1cm}
			$ \begin{bmatrix} 0& 0& 0 \\ 1& 0& 1 \\ 1& 1& 0 \end{bmatrix} $ &
			$ \begin{bmatrix} 0& 0& 0 \\ 0& 1& 0 \\ 0& 0& 1 \end{bmatrix} $
			\\[1.5em]
			\rule{0pt}{1cm}
			$ \begin{bmatrix} 0& 0& 0 \\ 1& 0& 1 \\ 1& 1& 0 \end{bmatrix} $ &
			$ \begin{bmatrix} 1& 1& 1 \\ 1& 0& 1 \\ 1& 1& 0 \end{bmatrix} $
			\\[1.5em]
			\rule{0pt}{1cm}
			$ \begin{bmatrix} 0& 1& 1 \\ 0& 0& 0 \\ 1& 1& 0 \end{bmatrix} $ &
			$ \begin{bmatrix} 0& 1& 1 \\ 1& 1& 1 \\ 1& 1& 0 \end{bmatrix} $
			\\[1.5em]
			\rule{0pt}{1cm}
			$ \begin{bmatrix} 0& 1& 1 \\ 0& 0& 0 \\ 1& 1& 0 \end{bmatrix} $ &
			$ \begin{bmatrix} 1& 0& 0 \\ 0& 0& 0 \\ 0& 0& 1 \end{bmatrix} $
			\\[1.5em]
			\rule{0pt}{1cm}
			$ \begin{bmatrix} 0& 1& 1 \\ 1& 0& 1 \\ 0& 0& 1 \end{bmatrix} $ &
			$ \begin{bmatrix} 0& 1& 1 \\ 1& 0& 1 \\ 1& 1& 1 \end{bmatrix} $
			\\[1.5em]
			\rule{0pt}{1cm}
			$ \begin{bmatrix} 0& 1& 1 \\ 1& 0& 1 \\ 0& 0& 1 \end{bmatrix} $ &
			$ \begin{bmatrix} 1& 0& 0 \\ 0& 1& 0 \\ 0& 0& 0 \end{bmatrix} $
			\\[1.5em]
			\hline
		\end{tabular}
	\end{minipage}
	\hfill
	\begin{minipage}{0.39\textwidth}
		\centering
		\begin{tabular}{| c | c |}
			\hline
			$\lambda$ & $\mu$
			\\
			\hline
			\rule{0pt}{1cm}
			$ \begin{bmatrix} 1& 0& 0 \\ 0& 1& 0 \\ 1& 1& 1 \end{bmatrix} $ &
			$ \begin{bmatrix} 0& 0& 0 \\ 0& 0& 0 \\ 0& 0& 0 \end{bmatrix} $
			\\[1.5em]
			\rule{0pt}{1cm}
			$ \begin{bmatrix} 1& 0& 0 \\ 1& 1& 1 \\ 0& 0& 1 \end{bmatrix} $ &
			$ \begin{bmatrix} 0& 0& 0 \\ 0& 0& 0 \\ 0& 0& 0 \end{bmatrix} $
			\\[1.5em]
			\rule{0pt}{1cm}
			$ \begin{bmatrix} 1& 1& 1 \\ 0& 1& 0 \\ 0& 0& 1 \end{bmatrix} $ &
			$ \begin{bmatrix} 0& 0& 0 \\ 0& 0& 0 \\ 0& 0& 0 \end{bmatrix} $
			\\[1.5em]
			\hline
			\rule{0pt}{1cm}
			$ \begin{bmatrix} 1& 0& 0 \\ 0& 1& 0 \\ 1& 1& 1 \end{bmatrix} $ &
			$ \begin{bmatrix} 0& 1& 1 \\ 1& 0& 1 \\ 1& 1& 0 \end{bmatrix} $
			\\[1.5em]
			\rule{0pt}{1cm}
			$ \begin{bmatrix} 1& 0& 0 \\ 1& 1& 1 \\ 0& 0& 1 \end{bmatrix} $ &
			$ \begin{bmatrix} 0& 1& 1 \\ 1& 0& 1 \\ 1& 1& 0 \end{bmatrix} $
			\\[1.5em]
			\rule{0pt}{1cm}
			$ \begin{bmatrix} 1& 1& 1 \\ 0& 1& 0 \\ 0& 0& 1 \end{bmatrix} $ &
			$ \begin{bmatrix} 0& 1& 1 \\ 1& 0& 1 \\ 1& 1& 0 \end{bmatrix} $
			\\[1.5em]
			\hline
			\rule{0pt}{1cm}
			$ \begin{bmatrix} 1& 1& 1 \\ 1& 1& 1 \\ 1& 1& 1 \end{bmatrix} $ &
			$ \begin{bmatrix} 0& 0& 0 \\ 0& 0& 0 \\ 0& 0& 0 \end{bmatrix} $
			\\[1.5em]
			\hline
			\rule{0pt}{1cm}
			$ \begin{bmatrix} 1& 1& 1 \\ 1& 1& 1 \\ 1& 1& 1 \end{bmatrix} $ &
			$ \begin{bmatrix} 0& 1& 1 \\ 1& 0& 1 \\ 1& 1& 0 \end{bmatrix} $
			\\[1.5em]
			\hline
		\end{tabular}
	\end{minipage}
\hfill
\begin{minipage}{0.39\textwidth}
	\centering
	\begin{tabular}{| c | c |}
		\hline
		$\lambda$ & $\mu$
		\\
		\hline
		\rule{0pt}{1cm}
		$ \begin{bmatrix} 1& 0& 0 \\ 0& 1& 0 \\ 1& 1& 1 \end{bmatrix} $ &
		$ \begin{bmatrix} 1& 0& 0 \\ 0& 1& 0 \\ 0& 0& 1 \end{bmatrix} $
		\\[1.5em]
		\rule{0pt}{1cm}
		$ \begin{bmatrix} 1& 0& 0 \\ 1& 1& 1 \\ 0& 0& 1 \end{bmatrix} $ &
		$ \begin{bmatrix} 1& 0& 0 \\ 0& 1& 0 \\ 0& 0& 1 \end{bmatrix} $
		\\[1.5em]
		\rule{0pt}{1cm}
		$ \begin{bmatrix} 1& 1& 1 \\ 0& 1& 0 \\ 0& 0& 1 \end{bmatrix} $ &
		$ \begin{bmatrix} 1& 0& 0 \\ 0& 1& 0 \\ 0& 0& 1 \end{bmatrix} $
		\\[1.5em]
		\hline
		\rule{0pt}{1cm}
		$ \begin{bmatrix} 1& 0& 0 \\ 0& 1& 0 \\ 1& 1& 1 \end{bmatrix} $ &
		$ \begin{bmatrix} 1& 1& 1 \\ 1& 1& 1 \\ 1& 1& 1 \end{bmatrix} $
		\\[1.5em]
		\rule{0pt}{1cm}
		$ \begin{bmatrix} 1& 0& 0 \\ 1& 1& 1 \\ 0& 0& 1 \end{bmatrix} $ &
		$ \begin{bmatrix} 1& 1& 1 \\ 1& 1& 1 \\ 1& 1& 1 \end{bmatrix} $
		\\[1.5em]
		\rule{0pt}{1cm}
		$ \begin{bmatrix} 1& 1& 1 \\ 0& 1& 0 \\ 0& 0& 1 \end{bmatrix} $ &
		$ \begin{bmatrix} 1& 1& 1 \\ 1& 1& 1 \\ 1& 1& 1 \end{bmatrix} $
		\\[1.5em]
		\hline
		\rule{0pt}{1cm}
		$ \begin{bmatrix} 1& 1& 1 \\ 1& 1& 1 \\ 1& 1& 1 \end{bmatrix} $ &
		$ \begin{bmatrix} 1& 0& 0 \\ 0& 1& 0 \\ 0& 0& 1 \end{bmatrix} $
		\\[1.5em]
		\hline
		\rule{0pt}{1cm}
		$ \begin{bmatrix} 1& 1& 1 \\ 1& 1& 1 \\ 1& 1& 1 \end{bmatrix} $ &
		$ \begin{bmatrix} 1& 1& 1 \\ 1& 1& 1 \\ 1& 1& 1 \end{bmatrix} $
		\\[1.5em]
		\hline
	\end{tabular}
	\end{minipage}
}
	\vspace{1em}
	\caption{These are all the pairs $(\lambda,\mu)$ that give rise to  a lifting of $\cF\cK_3$ over $\F_2G_\X$. Pairs yielding isomorphic liftings are displayed together.}
\end{table}

\begin{remark}\label{rmk:tetrahedron}
	We also have considered to classify the liftings of the tetrahedron Nichols algebra in characteristic $2$ discovered by Gra\~na, Heckenberger and Vendramin \cite{GHV}. This algebra is presented by four generators subject to quadratic relations similar to those of the Fomin--Kirillov algebra, together a cubic relation. However, the computations required to characterize the parameter deforming the cubic relation are quite involved, and we need more time to complete them. We plan to address this in a future work on the classification of pointed liftings over the tetrahedron group.
\end{remark}

\subsection{An example with infinite-dimensional coradical}
Here $\ku$ is a field of characteristic zero. 
Let $G=\langle g\rangle$ be the free abelian group generated by $g$ and $V=\ku\{x_1,x_2\}\in\ydg$ the Yetter-Drinfeld module given by
\begin{align*}
	g\cdot x_1=x_1,\quad g\cdot x_2=x_2+x_1,\quad\delta(x_1)=g\ot x_1\quad\mbox{and}\quad\delta(x_2)=g\ot x_2.
\end{align*}
The Nichols algebra of $V$ is the so-called Jordan plane \cite{AAHjordan} and it is defined by the relation $x_1x_2-x_2x_1-\frac{1}{2}x_1^2$.

Let $M$ be the extension of $V$ by $\ku G$ given by
\begin{align*}
	g\cdot x_1=x_1+(1-g)\quad\mbox{and}\quad g\cdot x_2=x_2+x_1.
\end{align*}
Thus, the Jordanian enveloping algebra of $\mathfrak{sl}(2)$ defined by Andruskiewitsch, Angiono and Heckenberger in \cite[(1.4)]{corrigendum} can be presented as
\begin{align*}
	\fU^{\mathrm{jordan}}=T(V)\#_M\ku G/\left\langle\mbox{$x_1x_2-x_2x_1-\frac{1}{2}x_1^2+x_2+\frac{1}{2}x_1$}\right\rangle.
\end{align*}
In \cite{corrigendum} the authors showed that $\fU^{\mathrm{jordan}}$ is a lifting of $\B(V)$ over $\ku G$, and realized it as a Hopf subalgebra of  a Hopf algebra introduced by Ohn \cite{ohn}.

Let $M'$ be the extension of $V$ by $\ku G$ given by
\begin{align*}
	g\cdot y_1=y_1+1\quad\mbox{and}\quad g\cdot y_2=y_2+y_1.
\end{align*}
We set $\fU'=T(V)\#_{M'}\ku G/\langle x_1x_2-x_2x_1-\frac{1}{2}x_1^2+x_2+\frac{1}{2}x_1\rangle$. Similar to $\fU^{\rm jordan}$, $\fU'$ can be described as the iterated Ore extension $\ku G[x_1;\id,\delta_1][x_2;\sigma,\delta_2]$ with $\delta_1(g)=-g$, while $\sigma$ and $\delta_2$ are as in \cite[Proposition 6]{corrigendum}.

It is straightforward to verify that the coactions in Lemma \ref{le:coactions} induce corresponding coactions in $\fU'$ over $\fU^{\rm jordan}$ and $\B(V)\#\ku G$ making it a left and right comodule algebra, respectively. Moreover, using the strategy proposed in \cite[Section 5]{AAnMGV} as in Theorem \ref{teo:liftings of FK}, we obtain  the following.

\begin{proposition}
	The algebra $\fU'$ is a $(\fU^{\rm jordan},\B(V)\#\ku G)$-bigalois extension and therefore $\fU^{\rm jordan}$ is a cocycle deformation of $\B(V)\#\ku G$. \qed
\end{proposition}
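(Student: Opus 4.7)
The plan is to follow the recipe from \cite[Section 5]{AAnMGV} already applied in the proof of Theorem \ref{teo:liftings of FK}\ref{item:A} (see Figure \ref{fig:lifting tetrahedron}). The input at the top of the diagram is in place: the $\ku G$-modules $M$ and $M'$ both come from the additive cocycle $b:\ku G\times V\to\ku$ with $b(g,x_1)=1$ and $b(g,x_2)=0$, the condition \eqref{eq:M extension pointed case} is immediate on the infinite cyclic group $\langle g\rangle$, and Corollary \ref{cor:TVMH pointed case} guarantees that $T(V)\#_{M'}\ku G$ is a $(T(V)\#_M\ku G,\,T(V)\#\ku G)$-bigalois extension. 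The task is to descend this to a bigalois extension relating $\fU^{\rm jordan}$ and $\B(V)\#\ku G$.

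Write $r=x_1x_2-x_2x_1-\tfrac{1}{2}x_1^2\in T(V)\#\ku G$ and $r'=y_1y_2-y_2y_1-\tfrac{1}{2}y_1^2\in T(V)\#_{M'}\ku G$. Following \cite[Section 4.5]{tour}, choose the colinear section $\gamma_1:T(V)\#\ku G\to T(V)\#_{M'}\ku G$ of the right coaction normalized by $\gamma_1(x_i)=y_i$ and $\gamma_1(g^n)=g^n$; one verifies $\gamma_1(r)=r'$ as in loc.\ cit. Since $r$ is homogeneous of degree $2$, we have $S(r)=g^{-2}r$ in $T(V)\#\ku G$, and the subalgebra $Y$ generated by $S(r)$ is isomorphic to a tensor algebra by \cite[Lemma 28]{AV}. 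The plan is then to define
\[
\varphi(S(r))=g^{-2}\bigl(r'+y_2+\tfrac{1}{2}y_1\bigr)\in T(V)\#_{M'}\ku G,
\]
extend $\varphi$ to an algebra map $Y\to T(V)\#_{M'}\ku G$, and verify that it is right $\B(V)\#\ku G$-colinear. This last property reduces to the identity
\[
\rho_r\bigl(r'+y_2+\tfrac{1}{2}y_1\bigr)=\bigl(r'+y_2+\tfrac{1}{2}y_1\bigr)\ot 1+g^2\ot r,
\]
which is a direct computation in $T(V)\#_{M'}\ku G$ using $gy_1=(y_1+1)g$ and $gy_2=(y_2+y_1)g$ to carry $g$'s past $y$'s. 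It is precisely this calculation that pins down the correction $y_2+\tfrac{1}{2}y_1$ as the unique one making the framework applicable.

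By construction $T(V)\#_{M'}\ku G/\langle\varphi(Y^+)\rangle=\fU'$, which is nonzero thanks to the iterated Ore extension presentation $\ku G[x_1;\id,\delta_1][x_2;\sigma,\delta_2]$ recalled just before the proposition. Hence \cite[Theorem 8]{gunther} yields that $\fU'$ is a right Galois extension of $\B(V)\#\ku G$. Finally, \cite[Corollary 5.12]{AAnMGV} together with the explicit form of $\gamma_1$ upgrades this to a left Galois extension of $\fU^{\rm jordan}$; consequently $\fU'$ is a bigalois extension and the cocycle deformation statement follows from \cite[Theorem 3.9]{schauenburgL}. The main obstacle is the displayed coaction identity: once it is checked, everything else is a routine invocation of the already-established framework, exactly as in the proof of Theorem \ref{teo:liftings of FK}\ref{item:A}.
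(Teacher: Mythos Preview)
Your proposal is correct and follows exactly the approach the paper intends: the paper's proof is simply the sentence ``using the strategy proposed in \cite[Section 5]{AAnMGV} as in Theorem \ref{teo:liftings of FK}'' together with the iterated Ore extension remark ensuring $\fU'\neq0$, and you have faithfully expanded that recipe. Your displayed coaction identity $\rho_r(r'+y_2+\tfrac{1}{2}y_1)=(r'+y_2+\tfrac{1}{2}y_1)\ot1+g^2\ot r$ is the only substantive computation, and it checks out directly from $gy_1=(y_1+1)g$, $gy_2=(y_2+y_1)g$.
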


\bibliographystyle{amsplain}

\end{document}